\documentclass[10pt,reqno,twoside]{amsart}
\usepackage{amsmath,amsfonts,amsthm,amssymb,amscd,stmaryrd,url,amstext, amsxtra,amsopn }

\DeclareFontFamily{OT1}{rsfs}{}
\DeclareFontShape{OT1}{rsfs}{n}{it}{<-> rsfs10}{}
\DeclareMathAlphabet{\mathscr}{OT1}{rsfs}{n}{it}
 \setlength{\parskip}{1ex plus 0.5ex} 

 \newcommand{\ds}{\displaystyle}

\newcommand{\C}{{\mathbb{C}}}

\newcommand{\Lo}{{\mathscr{L}}}

\renewcommand{\Re}{{\mathfrak{Re}}}
\renewcommand{\Im}{{\mathfrak{Im}}}


\newtheorem{theorem}{Theorem}
 \newtheorem{corollary}{Corollary}[theorem]
 \newtheorem{lemma}[theorem]{Lemma}

  \theoremstyle{remark}

\reversemarginpar 

\begin{document}
\title{Explicit zero density theorems for Dedekind zeta functions}
\author{Habiba Kadiri and Nathan Ng}
\maketitle

{\def\thefootnote{}
\footnote{\noindent  \ {\it 1991 Mathematics Subject Classification}.
Primary: 11M41; Secondary: 11R42. 
}}


\section{Introduction}

This article concerns the zeros of Dedekind zeta functions. 
We prove bounds 
for the number of zeros of Dedekind zeta functions in boxes and we prove zero repulsion theorems
for these zeros.  The formula for zeros in boxes is classical and dates back to Riemann and von Mangoldt.
The zero repulsion property is commonly referred to as the Deuring-Heilbronn phenomenon.
It asserts that if an $L$-function has a zero very close to $s=1$ then the other zeros of this $L$-function 
are pushed further away from $s=1$.  Theorems of this type already exist in the literature, but they do not give 
explicit constants.  Our theorems will determine such constants and this is important in our related work 
which concerns an explicit bound for the least prime ideal in Chebotarev's density theorem.  
Let $K_0$ be a number field and $K$ a Galois extension of $K_0$ with ring of integers $\mathcal{O}_K$. Its 
degree is denoted $n_K = [K : \mathbb{Q}]$ and its absolute discriminant is $d_K$. 
Let $G$ be the Galois group of $K/K_0$, and let $C \subset G$ be a conjugacy class.
We show in \cite{KN} that there exists an unramified prime ideal $\mathfrak{p}$ of degree one such that its Frobenius $\sigma_{\mathfrak{p}}=C$ and its norm $\mathbb{N} \mathfrak{p} \le d_K^{C_0}$ for an explicit constant $C_0 >0$.  This theorem makes use 
of various results concerning the location and number of zeros of the Dedekind zeta function of $K$.  

We now state our results.  
Throughout this article we shall encounter the quantity $\log d_K$.  From this point on, we shall employ the abbreviation
\begin{equation}
  \label{eq:L}
   \mathscr{L} = \log d_K. 
\end{equation}
The Dedekind zeta function of $K$ is  
\[
   \zeta_{K}(s) =
   \sum_{\mathfrak{a} \subset  \mathcal{O}_K }
\frac{1}{(\mathbb{N} \mathfrak{a})^s}
\]
where $\mathfrak{a}$ ranges through non-zero ideals. 
We now define a function which counts the zeros of $\zeta_K(s)$ in boxes.
Throughout this article we shall denote the non-trivial zeros of $\zeta_K(s)$ as $\varrho=\beta+i \gamma$
where $\beta, \gamma \in \mathbb{R}$.  We set for $T \ge 0$
\[
   N_{K}(T) = \# \{ \varrho \ | \ \zeta_K(\varrho)=0, \ 0 < \beta < 1, \ | \gamma | \le T \} .
\]

Our first result is 
\begin{theorem} \label{nt}
 Let $T \ge 1$ and $0 < \eta \le \frac{1}{2}$ then 
\[
   |N_K(T)- \frac{T}{\pi} 
   \log(\mbox{$(\frac{T}{2 \pi e})^{n_K}$}d_K    )   | 
   \le 
  c_1(\eta)  (\Lo  +  n_K \log T) + c_2(\eta) n_K  
  + 7.6227
\]
where 
    \begin{align*}
   c_1(\eta) & = \frac{1+ 2 \eta}{\pi \log 2}, \\
   c_2(\eta) 
    & = 0.2675 - 0.2680 \eta  
    + \frac{2}{\log 2} \log \frac{\zeta(1+\eta)^2}{\zeta(2 +2 \eta)}  
    +\frac{2}{\pi} \log \zeta( \tfrac{3}{2}+ 2 \eta). 
\end{align*}
\end{theorem}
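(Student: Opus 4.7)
The plan is to follow the classical Riemann--von Mangoldt template, but to track every constant explicitly. Set $\sigma_0 = 1+\eta$ and introduce the completed Dedekind zeta function
\[
  \xi_K(s) \;=\; \tfrac{1}{2}\,s(s-1)\, A_K^{s/2}\,\Gamma_\R(s)^{r_1}\,\Gamma_\C(s)^{r_2}\,\zeta_K(s),
\]
an entire function whose zeros are exactly the nontrivial zeros of $\zeta_K$ and which satisfies both $\xi_K(s)=\xi_K(1-s)$ and $\overline{\xi_K(s)}=\xi_K(\bar{s})$. The first step is to apply the argument principle on the rectangle with vertices $\sigma_0\pm iT$ and $1-\sigma_0\pm iT$; the two symmetries reduce the variation around the full contour to four times the variation $\Delta_L\arg\xi_K$ along the L-shaped path from $\sigma_0$ up to $\sigma_0+iT$ and then horizontally to $\tfrac{1}{2}+iT$, yielding
\[
  \pi N_K(T) \;=\; 2\,\Delta_L \arg \xi_K(s).
\]
Splitting $\arg\xi_K$ factor by factor reduces the task to four separate variations along $L$.

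The first three produce the main term. The polynomial $\tfrac{1}{2}s(s-1)$ contributes $O(1)$ uniformly in $T\ge 1$; $\arg A_K^{s/2}$ contributes $\tfrac{T}{2}\bigl(\Lo + O(n_K)\bigr)$; and the archimedean factors are handled by an explicit Stirling expansion of the shape $\arg\Gamma(\sigma+iT) = T\log T - T + O(1)$ applied $r_1$ times to $\Gamma(s/2)$ and $r_2$ times to $\Gamma(s)$. Summing and dividing by $\pi$ reassembles the announced main term $\tfrac{T}{\pi}\log((T/2\pi e)^{n_K}d_K)$, while the bounded remainders from Stirling and the polynomial piece feed into the numerical constant $7.6227$ together with those parts of $c_2(\eta)n_K$ that are not $\zeta$-values.

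The fourth and nontrivial variation is $2\arg\zeta_K(\tfrac{1}{2}+iT)$, and it is what produces $c_1(\eta)$ and $c_2(\eta)$. Following Backlund, $|\arg\zeta_K(\tfrac{1}{2}+iT)|$ is bounded, up to an $O(n_K\log\zeta(1+\eta))$ boundary term coming from $\arg\zeta_K(\sigma_0+iT)$, by a multiple of the number of sign changes of $\Re\zeta_K(\sigma+iT)$ as $\sigma$ runs over $[\tfrac{1}{2},\sigma_0]$; these coincide with the real zeros in that interval of the holomorphic function
\[
  g(s) \;=\; \tfrac{1}{2}\bigl(\zeta_K(s+iT)+\zeta_K(s-iT)\bigr).
\]
Jensen's formula applied to $g$ on the disk $|s-\sigma_0|\le R$ with $R=1+2\eta$, extracted on the concentric smaller disk of radius $r=\tfrac{1}{2}+\eta$ (which already contains the whole segment $[\tfrac{1}{2},\sigma_0]$), bounds that zero count by
\[
  \frac{1}{\log(R/r)}\,\log\!\frac{\max_{|s-\sigma_0|=R}|g(s)|}{|g(\sigma_0)|}
  \;=\; \frac{1}{\log 2}\,\log\!\frac{M}{|g(\sigma_0)|},
\]
and this $\tfrac{1}{\log 2}$ is the source of the $\log 2$ in the denominator of $c_1(\eta)$. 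The numerator $M$ is controlled above by the Euler-product majorant $|\zeta_K(\sigma+it)|\le \zeta(\sigma)^{n_K}$ at the rightmost abscissa $\sigma=\tfrac{3}{2}+2\eta$ of the large disk (whence $\log\zeta(\tfrac{3}{2}+2\eta)$ enters $c_2(\eta)$), combined with the functional-equation convexity estimate at the leftmost abscissa $\sigma=-\eta$, which supplies the $\tfrac{1}{2}+\eta$ scaling of $\Lo+n_K\log T$; the denominator is bounded below by $|g(\sigma_0)|\ge \zeta_K(2\sigma_0)/\zeta_K(\sigma_0)^2$ via a standard Rankin--Selberg type positivity manipulation, producing the $\log(\zeta(1+\eta)^2/\zeta(2+2\eta))$ term in $c_2(\eta)$.

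The main obstacle will be carrying Stirling, Jensen, the convexity interpolation on the left half of the large disk, and the Backlund sign-change counting through sharply enough to land on the advertised explicit values of $c_1(\eta)$, $c_2(\eta)$, and the $7.6227$ constant, particularly in the uniformity $T\ge 1$, where Stirling's formula is still some distance from its asymptotic form.
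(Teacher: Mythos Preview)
Your outline follows the right template and correctly identifies Backlund--Jensen as the engine for bounding $\arg\zeta_K$, but there is a genuine gap in the Jensen step. With your choice $g(s)=\tfrac12\bigl(\zeta_K(s+iT)+\zeta_K(s-iT)\bigr)$ you have, for real $\sigma$, $g(\sigma_0)=\Re\,\zeta_K(\sigma_0+iT)$. The inequality $|\zeta_K(s)|\ge \zeta_K(2\sigma)/\zeta_K(\sigma)$ bounds the \emph{modulus}, not the real part, and $\Re\,\zeta_K(1+\eta+iT)$ can be arbitrarily small (even zero), so your claimed lower bound $|g(\sigma_0)|\ge \zeta_K(2\sigma_0)/\zeta_K(\sigma_0)^2$ simply fails. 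The paper's remedy is the crucial extra ingredient you are missing: one works instead with $f(w)=\tfrac12\bigl(a(w+iT)^N+a(w-iT)^N\bigr)$ where $a(w)=(w-1)\zeta_K(w)$, so that $f(\sigma_0)=\Re\bigl(a(\sigma_0+iT)^N\bigr)$; writing $a(\sigma_0+iT)=re^{i\phi}$, Dirichlet's approximation theorem produces a sequence of $N\to\infty$ with $N\phi\to 0\pmod{2\pi}$, whence $|f(\sigma_0)|\sim |a(\sigma_0+iT)|^N$, and \emph{now} the Euler-product lower bound applies. One divides Jensen's inequality by $N$ and lets $N\to\infty$.

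Two smaller points. First, the factor $(w-1)$ in $a(w)$ is not cosmetic: for $T$ close to $1$ the poles of $\zeta_K(s\pm iT)$ at $s=1\mp iT$ lie inside your Jensen disk $|s-\sigma_0|\le 1+2\eta$, so your $g$ is not even holomorphic there. Second, the term $\tfrac{2}{\pi}\log\zeta(\tfrac32+2\eta)$ in $c_2(\eta)$ does \emph{not} come from the rightmost point of the Jensen disk (which is $2+3\eta$, not $\tfrac32+2\eta$); in the paper it arises from the vertical leg of the contour, because the argument-principle rectangle is taken out to $\sigma_1=\tfrac32+2\eta$ rather than to $1+\eta$. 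With your rectangle at $1+\eta$ you would get a $\log\zeta(1+\eta)$ there instead, and the stated formula for $c_2(\eta)$ would not come out.
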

These results were proven by following arguments of Backlund \cite{Ba}, Rosser \cite{Ro}, and McCurley \cite{Mc}
who obtained analogous results for the Riemann zeta function and Dirichlet $L$-functions.  Their work is 
important in arguments which give explicit zero-free regions for $L$-functions and explicit bounds for prime counting 
functions. 

Our next result is an inequality for the real part of the logarithmic derivative of $\zeta_K(s)$.   We establish 
\begin{theorem}  \label{gr}
Let $0 < \epsilon \le 10^{-2}$, $s=\sigma+it$, $\sigma >1$, $|t| \le 1$, and
\begin{equation}
\label{eq:phi}
\phi = \frac{1-\tfrac{1}{\sqrt{5}}}{2}=0.276393 \ldots
\end{equation}
We define a multiset of non-trivial zeros of $\zeta_K(s)$ by
\begin{equation}\label{eq:defRepsilont}
  \mathcal{R}_{\epsilon,t} = \{ \varrho \ |  \ \zeta_K(\varrho)=0, \ 1- \epsilon \le \beta < 1, \ |\gamma - t| \le 1 \}. 
\end{equation}
For $1 < \Re(s) \le 1+\epsilon$, we have
\begin{multline}
  \label{eq:graham}
-\Re \left( \frac{\zeta_K'}{\zeta_K}(s) \right) \le   
\Re\Big( \frac{1}{s-1}\Big) 
-  \sum_{\varrho \in \mathcal{R}_{\epsilon,t}}  \Re \Big( \frac{1}{s -\varrho} \Big)
+ \phi \Lo
 -0.0354 n_K \\ + 5 \epsilon ( |\mathcal{R}_{\epsilon,t}| +1)+ 0.1216.
\end{multline}
\end{theorem}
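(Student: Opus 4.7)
The plan is to start from the classical Hadamard product of the completed Dedekind zeta function $\Xi_K(s)=s(s-1)d_K^{s/2}\Gamma_{\R}(s)^{r_1}\Gamma_{\C}(s)^{r_2}\zeta_K(s)$, where $(r_1,r_2)$ counts the real and complex archimedean places of $K$ so that $r_1+2r_2=n_K$. Taking the logarithmic derivative of both the Hadamard product and of this factorization, and exploiting the functional equation $\Xi_K(s)=\Xi_K(1-s)$ through the standard identity $\Re(B)=-\sum_\varrho\Re(1/\varrho)$, one arrives at the master formula
\[
-\Re\Big(\tfrac{\zeta_K'}{\zeta_K}(s)\Big) = \Re\Big(\tfrac{1}{s-1}\Big)+\Re\Big(\tfrac{1}{s}\Big)+\tfrac{1}{2}\Lo+r_1\Re\Big(\tfrac{\Gamma_{\R}'}{\Gamma_{\R}}(s)\Big)+r_2\Re\Big(\tfrac{\Gamma_{\C}'}{\Gamma_{\C}}(s)\Big)-\sum_\varrho\Re\Big(\tfrac{1}{s-\varrho}\Big).
\]

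Next I would dispose of the archimedean block. For $1<\sigma\le 1+\epsilon$ and $|t|\le 1$, explicit series expansions of the digamma function near $s=1/2$ (for $\Gamma_{\R}$) and near $s=1$ (for $\Gamma_{\C}$), together with $\epsilon\le 10^{-2}$ and the relation $r_1+2r_2=n_K$, produce a numerical upper bound of the shape
\[
\Re\Big(\tfrac{1}{s}\Big)+r_1\Re\Big(\tfrac{\Gamma_{\R}'}{\Gamma_{\R}}(s)\Big)+r_2\Re\Big(\tfrac{\Gamma_{\C}'}{\Gamma_{\C}}(s)\Big)\le A\,n_K+B+O(\epsilon),
\]
with $A\approx -1.55$ and $B$ a small positive constant. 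Although $A$ is strongly negative, it will be partly consumed by the loss incurred in the zero-sum step below, and the bookkeeping is arranged so that exactly the stated $-0.0354\,n_K+0.1216$ survives.

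The heart of the proof is to extract from the zero sum not only the contribution of the close zeros $\mathcal{R}_{\epsilon,t}$ but also a lower bound of size $(\tfrac12-\phi)\Lo$ supplied by the far zeros. After splitting
\[
\sum_\varrho\Re\Big(\tfrac{1}{s-\varrho}\Big)=\sum_{\varrho\in\mathcal{R}_{\epsilon,t}}\Re\Big(\tfrac{1}{s-\varrho}\Big)+\sum_{\varrho\notin\mathcal{R}_{\epsilon,t}}\Re\Big(\tfrac{1}{s-\varrho}\Big),
\]
every term of the second sum is nonnegative because $\Re(s)>1>\beta$. Pairing each far zero $\varrho=\beta+i\gamma$ with its functional-equation partner $1-\bar\varrho$ (also a zero) produces the positive symmetric kernel
\[
\frac{\sigma-\beta}{(\sigma-\beta)^2+(t-\gamma)^2}+\frac{\sigma-(1-\beta)}{(\sigma-(1-\beta))^2+(t-\gamma)^2},
\]
and summation by parts against $N_K(T)$, using Theorem \ref{nt} as the explicit input, then delivers
\[
\sum_{\varrho\notin\mathcal{R}_{\epsilon,t}}\Re\Big(\tfrac{1}{s-\varrho}\Big)\ge\bigl(\tfrac12-\phi\bigr)\Lo-C_1 n_K-C_2-5\epsilon(|\mathcal{R}_{\epsilon,t}|+1).
\]
The last term absorbs the zeros straddling the boundary of $\mathcal{R}_{\epsilon,t}$: the paired-zero lower bound naturally uses a slightly different cutoff from $\beta\ge 1-\epsilon$, and each reassigned zero costs at most $5\epsilon$.

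The main obstacle is the emergence of the precise Graham constant $\phi=(1-1/\sqrt{5})/2$, pinned down by the quadratic identity $\phi(1-\phi)=1/5$ that drops out of the paired-zero kernel once a free cutoff parameter is optimised against the zero density furnished by Theorem \ref{nt}. Equally delicate is the explicit numerology required to balance the very negative archimedean coefficient $A\,n_K$ against the $-C_1 n_K$ loss from the far-zero bound, so that only the clean residual $-0.0354\,n_K+0.1216$ remains in the final inequality; carrying that arithmetic through with fully explicit constants is the bulk of the computational labour.
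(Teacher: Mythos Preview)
Your overall architecture is right: start from the Hadamard explicit formula, isolate the close zeros $\mathcal{R}_{\epsilon,t}$, and show that the far zeros contribute at least $(\tfrac12-\phi)\Lo$ so that the coefficient of $\Lo$ drops from $\tfrac12$ to $\phi$. But the mechanism you propose for that lower bound is not the one that works, and I do not see how to repair it as stated.

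You want to obtain
\[
\sum_{\varrho\notin\mathcal{R}_{\epsilon,t}}\Re\Big(\frac{1}{s-\varrho}\Big)\ \ge\ \Big(\tfrac12-\phi\Big)\Lo - C_1 n_K - C_2 - 5\epsilon(|\mathcal{R}_{\epsilon,t}|+1)
\]
by pairing $\varrho\leftrightarrow 1-\bar\varrho$ and then running summation by parts against $N_K(T)$ from Theorem~\ref{nt}. The problem is that $N_K(T)$ only controls the \emph{vertical} distribution of zeros; it says nothing about the real parts $\beta$. The paired kernel you wrote down depends on $\beta$ as well as on $t-\gamma$, and for a zero just outside $\mathcal{R}_{\epsilon,t}$ with $\beta$ close to $1$ the kernel can be arbitrarily small relative to what you would get on the critical line. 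No amount of optimising a height cutoff against $N_K(T)$ produces the specific constant $\phi=(1-1/\sqrt5)/2$; the identity $\phi(1-\phi)=1/5$ is suggestive but is not what is driving the argument.

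What the paper actually does is Stechkin's differencing trick: one evaluates the explicit formula at a second point $s_1=\sigma_1+it$ with $\sigma_1=\tfrac12(1+\sqrt{1+4\sigma^2})$ and forms the combination
\[
-\Re\Big(\frac{\zeta_K'}{\zeta_K}(s)-\kappa\,\frac{\zeta_K'}{\zeta_K}(s_1)\Big),\qquad \kappa=\tfrac{1}{\sqrt5}.
\]
Stechkin's positivity lemma asserts that $D(s,\varrho)-\kappa D(s_1,\varrho)\ge 0$ for every zero (with $D(s,z)=\Re((s-z)^{-1}+(s-1+\bar z)^{-1})$), so all far zeros may simply be dropped; the constant $\phi=(1-\kappa)/2$ is exactly the reduced coefficient of $\Lo$ in the differenced formula. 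The $5\epsilon$ terms do not come from boundary zeros but from a perturbation lemma (Lemma~\ref{gbds}) measuring how far the parameters $(\sigma,\sigma_1,\beta)$ are from the idealized Stechkin values. Likewise the coefficient $-0.0354\,n_K$ is the residue of a near-cancellation between the differenced gamma term $\Re(\gamma_K'/\gamma_K(s)-\kappa\,\gamma_K'/\gamma_K(s_1))$ and the Euler-product bound for $-\kappa\Re(\zeta_K'/\zeta_K(s_1))$; it is not a direct bound on $\Re(\Gamma_{\R}'/\Gamma_{\R}(s))$ alone. In short, the missing idea is the second evaluation point $s_1$ and Stechkin's positivity, which replace your proposed partial-summation step entirely.
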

It is well known that  there exists an explicit constant $C_0$ such that 
$n_K \le  C_0 \Lo$ for $K  \ne \mathbb{Q}$. This follows from an inequality due to 
Minkowski:
\[
 d_K > \Big(\frac{\pi}{4} \Big)^{n_K}  \Big( \frac{n_{K}^{n_K}}{n_K!} \Big)^2 \text{ for } K  \ne \mathbb{Q}.
\]
This combined with Theorem \ref{nt} implies there exists an explicit constant $C_1$ such that 
\[
  |\mathcal{R}_{\epsilon,t}| \le N_K(2)
  \le C_1 \Lo
\]
for all $|t| \le 1$. 
\begin{corollary} 
\label{gr2}
Let $0 < \epsilon \le 10^{-2}$, $s=\sigma+it$, $\sigma >1$, and $|t| \le 1$.
If $d_K$ is sufficiently large, then there exists a positive constant $C_2$ such that
\begin{equation} 
\begin{split}
  \label{eq:graham2}
-\Re \left( \frac{\zeta_K'}{\zeta_K}(s) \right)  & \le   
\Re\Big( \frac{1}{s-1}\Big) 
-  \sum_{\varrho \in \mathcal{R}_{\epsilon,t}} 
    \Re\Big( \frac{1}{s-\varrho}\Big)  
 + (\phi +C_2 \epsilon) \Lo .
\end{split}
\end{equation}
\end{corollary}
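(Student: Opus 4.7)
The plan is to read off the corollary directly from Theorem~\ref{gr}, treating each of the three ``extra'' terms on the right-hand side of \eqref{eq:graham} in turn and showing that, once $d_K$ is large, all of them are absorbed into $C_2 \epsilon \Lo$.

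First, I would discard the $-0.0354\, n_K$ term. Since $n_K \ge 0$ this term is non-positive, so dropping it only weakens the inequality and is harmless for an upper bound. Thus after this step the error beyond $\phi \Lo$ on the right of \eqref{eq:graham} is
\begin{equation*}
  5\epsilon\bigl(|\mathcal{R}_{\epsilon,t}|+1\bigr)+0.1216.
\end{equation*}

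Next, I would invoke the bound $|\mathcal{R}_{\epsilon,t}|\le N_K(2)\le C_1\Lo$ already noted in the paragraph preceding the corollary; this is the place where Minkowski's inequality $n_K\le C_0\Lo$ and Theorem~\ref{nt} (applied at $T=2$ with any fixed $\eta$, say $\eta=\tfrac12$) combine to give an absolute constant $C_1$ depending only on $\eta$. Substituting this yields
\begin{equation*}
  5\epsilon\bigl(|\mathcal{R}_{\epsilon,t}|+1\bigr)+0.1216 \;\le\; 5 C_1 \epsilon\, \Lo + 5\epsilon + 0.1216.
\end{equation*}

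Finally, the leftover constant $5\epsilon+0.1216$ is independent of $K$ and $t$; since $\epsilon>0$ is fixed and $\Lo=\log d_K\to\infty$ as $d_K\to\infty$, for $d_K$ sufficiently large we have $5\epsilon+0.1216\le \epsilon\,\Lo$, i.e.\ once $\log d_K\ge 5+0.1216/\epsilon$. Combining the three estimates gives
\begin{equation*}
  \phi\Lo - 0.0354\, n_K + 5\epsilon\bigl(|\mathcal{R}_{\epsilon,t}|+1\bigr)+0.1216 \;\le\; \bigl(\phi + (5C_1+1)\epsilon\bigr)\Lo,
\end{equation*}
so one may take $C_2 = 5 C_1 + 1$. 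There is no genuine obstacle here: the only delicate point is making precise the meaning of ``$d_K$ sufficiently large,'' which is quantified as above, and checking that the hidden constants ($C_0$ from Minkowski and $C_1=C_1(\eta)$ from Theorem~\ref{nt}) are indeed independent of $K$, $t$, and $\epsilon$.
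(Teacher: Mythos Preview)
Your proposal is correct and follows the paper's own approach. The paragraph immediately preceding the corollary already records $n_K \le C_0\Lo$ (Minkowski) and $|\mathcal{R}_{\epsilon,t}| \le N_K(2) \le C_1\Lo$ (via Theorem~\ref{nt}), after which the corollary is read off from Theorem~\ref{gr} by discarding the non-positive $-0.0354\,n_K$ term and absorbing the remaining bounded terms into $C_2\epsilon\Lo$ for $d_K$ large---exactly as you do.
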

The inequality  given in Corollary \ref{gr2} will play an important role in obtaining zero-free
regions and zero-repulsion theorems. 
Versions of this result have been proven in Graham
\cite{Gr} for Dirichlet $L$-functions and implicitly in Stechkin \cite{St} for the Riemann zeta 
function. 
Our proof, as in \cite{St}, uses the global method, namely the classical explicit formula for $-\Re (\zeta_{K}'(s)/\zeta_{K}(s))$ in conjunction
with the Stechkin differencing trick. 
In order to obtain a negative contribution in $n_K$,  we need to improve on a lemma of McCurley on $-\Re (\Gamma'(s)/\Gamma(s))$.
In \cite{HB}, Heath-Brown employs a local method using a Jensen type formula which produces much better values of $\phi$ ($\phi \le \frac{1}{6}$). Recently, Li \cite[Lemma 4]{Li}
applied this method to $\zeta_K(s)$ and was able to obtain an inequality like \eqref{eq:graham2} with $\phi = \frac{1}{4}$ and an extra term of size $2n_K  \log (\tfrac{\Lo}{n_K})+O(n_K)$.
If $n_K=o(\Lo)$, then Li's result is superior to ours.  
On the other hand, for those fields $K$ with $n_K  \gg \Lo$ this error term becomes weaker than \eqref{eq:graham2}.

In order to obtain good zero-free regions with nice constants we will develop a smooth variant of the above theorem.  
Such results have already been proven by Heath-Brown in the case of Dirichlet $L$-functions. 
We shall follow closely his approach, though there are several differences in the argument. 
Let $f$ be  a continuous function from $[0,\infty)$ to $\mathbb{R}$ and supported in $[0,x_0)$. In addition, $f$ is twice differentiable on $(0,x_0)$ 
with a bounded and continuous second derivative. 
Its associated Laplace transform is $F(z) = \int_{0}^{\infty} f(t) e^{-zt} dt$.  

\begin{theorem} \label{smooth}
Let $0 < \epsilon \le 10^{-2}$, $0 < \delta < 1$, $s=\sigma + it$ with $\sigma > 1- \frac{(1-\delta)(\log \Lo)}{ x_0 \Lo}$, and $|t| \le 1$.
Suppose $f(0) \ge 0$. 
If $d_K$ is sufficiently large, then there exists a positive constant $C_3$ such that 
\begin{multline*}
 \Re \Big(  \sum_{\mathfrak{a} \subset  \mathcal{O}_K \backslash \{ 0 \} }
    \frac{\Lambda(\mathfrak{a})}{(N\mathfrak{a})^s} f(\Lo^{-1} \log N \mathfrak{a}) \Big)
    \le \Lo \Re F((s-1)\Lo)  \\
     - \Lo\sum_{\varrho \in \mathcal{R}_{\epsilon,t}} 
  \Re( F((s-\varrho) \Lo)) 
 + f(0)(\phi+ C_3 \epsilon) \Lo.
 \end{multline*}
\end{theorem}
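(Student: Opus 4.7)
My plan is to derive a smoothed version of the classical explicit formula for $-\zeta_K'/\zeta_K$, and then to reduce the residual ``constant'' contribution to the desired shape by applying the Stechkin differencing trick exactly as in the proof of Theorem~\ref{gr}.

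The compact support of $f$ makes $F(z)=\int_0^{x_0}f(t)e^{-zt}\,\d t$ entire, so the Laplace inversion $f(u)=\frac{1}{2\pi i}\int_{(c)}F(z)e^{zu}\,\d z$ holds for every $c\in\R$. Substituting and interchanging sum and integral (valid for $\Re(s)>1$ and $c<(\sigma-1)\Lo$) gives
\[
\sum_{\mathfrak{a}}\frac{\Lambda(\mathfrak{a})}{(N\mathfrak{a})^s}f\bigl(\Lo^{-1}\log N\mathfrak{a}\bigr)
=\frac{1}{2\pi i}\int_{(c)}F(z)\Bigl(-\frac{\zeta_K'}{\zeta_K}\Bigr)\!\left(s-\frac{z}{\Lo}\right)\d z.
\]
Inserting the Hadamard-type partial-fraction expansion
\[
-\frac{\zeta_K'}{\zeta_K}(w)=\frac{1}{w-1}+\frac{1}{w}-\sum_{\varrho}\Bigl(\frac{1}{w-\varrho}+\frac{1}{\varrho}\Bigr)-b_K+\tfrac{1}{2}\Lo+\frac{\Gamma_K'}{\Gamma_K}(w)
\]
into the integrand and evaluating term by term via residues yields the smoothed explicit formula
\[
\sum_{\mathfrak{a}}\frac{\Lambda(\mathfrak{a})}{(N\mathfrak{a})^s}f\bigl(\Lo^{-1}\log N\mathfrak{a}\bigr)=\Lo F((s-1)\Lo)-\Lo\sum_{\varrho}F((s-\varrho)\Lo)+f(0)\,E(s),
\]
where the pole at $w=1$ contributes $\Lo F((s-1)\Lo)$, each nontrivial zero $\varrho$ contributes $-\Lo F((s-\varrho)\Lo)$, and all remaining pieces (the pole of $1/w$, the archimedean $\Gamma_K'/\Gamma_K$, and the constants $-b_K+\tfrac{1}{2}\Lo$) combine into $f(0)\,E(s)$ with $\Re E(s)=\tfrac{1}{2}\Lo+O(n_K)$. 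The crucial decay estimate is $|F(z)|\ll|z|^{-2}$ on vertical strips, obtained by two integrations by parts using $f(x_0)=0$ and boundedness of $f''$; this justifies closing the contour to the right and drives convergence of the zero sum.

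Taking real parts and splitting $\sum_\varrho$ into $\mathcal R_{\epsilon,t}$ and its complement reduces the theorem to
\[
-\Lo\!\sum_{\varrho\notin\mathcal R_{\epsilon,t}}\!\Re F((s-\varrho)\Lo)+f(0)\Re E(s)\le f(0)(\phi+C_3\epsilon)\Lo.
\]
Zeros with $|\gamma-t|>1$ contribute $O(1)$ in total, since $|F(z)|\ll|z|^{-2}$ combined with the density bound $O(\Lo+n_K)=O(\Lo)$ per unit interval (Theorem~\ref{nt} together with Minkowski) gives a convergent sum. Zeros with $|\gamma-t|\le 1$ and $\beta<1-\epsilon$ are at most $O(\Lo)$ in number and each contributes $O(\epsilon)$ by Taylor-expanding $F$ around an imaginary argument, for a total of $O(\epsilon f(0)\Lo)$, absorbable into $C_3\epsilon f(0)\Lo$.

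The main obstacle is shaving the coefficient of $f(0)\Lo$ in $\Re E(s)$ from the naive value $\tfrac{1}{2}$ down to $\phi=(1-1/\sqrt 5)/2\approx 0.276$. I would achieve this by applying the Stechkin differencing trick directly to the smoothed identity: form the positive linear combination at $s,\,s+\alpha,\,s+2\alpha$ with the Stechkin-optimal weights used in the proof of Theorem~\ref{gr}. The hypothesis $f(0)\ge 0$ is precisely the sign condition needed so that this combination yields an upper bound rather than a lower bound. The archimedean contribution $\Re(\Gamma_K'/\Gamma_K)$ is handled by the improved McCurley-type bound already established for Theorem~\ref{gr}, producing an $O(n_K)=O(\Lo)$ error that is absorbed into $C_3\epsilon f(0)\Lo$. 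Finally, the extension of the identity from $\Re(s)>1$ to the full range $\sigma>1-(1-\delta)(\log\Lo)/(x_0\Lo)$ follows by analytic continuation, since both sides are meromorphic in $s$ with matching singular structure and the error estimates remain valid for $d_K$ sufficiently large.
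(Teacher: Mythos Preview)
Your overall strategy---derive a smoothed explicit formula and then invoke Stechkin's differencing to reduce the coefficient of $\Lo$ from $\tfrac12$ to $\phi$---matches the paper's, but the execution has two genuine gaps.

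First, Stechkin's positivity lemma is specific to the kernel $D(s,z)=\Re\bigl(\tfrac{1}{s-z}+\tfrac{1}{s-1+\bar z}\bigr)$: it asserts $D(s,\varrho)-\kappa D(s_1,\varrho)\ge 0$ for $s_1=\sigma_1+it$, $\sigma_1=\tfrac12(1+\sqrt{1+4\sigma^2})$, $\kappa=1/\sqrt5$. (Your description ``positive linear combination at $s,\,s+\alpha,\,s+2\alpha$'' is not Stechkin's trick; that is the $3+4\cos\theta+\cos 2\theta$ device, which plays no role here.) There is no reason the analogue $\Re F((s-\varrho)\Lo)-\kappa\Re F((s_1-\varrho)\Lo)\ge 0$ should hold for a general Laplace transform $F$, so you cannot discard zeros outside $\mathcal R_{\epsilon,t}$ from the smoothed sum by positivity. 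The paper circumvents this by writing $F(z)=f(0)z^{-1}+F_0(z)$ with $|F_0(z)|\le c(f)|z|^{-2}$ and running the contour argument with $F_0$ rather than $F$. This produces
\[
\mathcal K(s)=\Lo F_0((s-1)\Lo)-\Lo\sum_\varrho F_0((s-\varrho)\Lo)-f(0)\,\Re\frac{\zeta_K'}{\zeta_K}(s)+O(c(f)),
\]
in which the \emph{unsmoothed} term $-f(0)\Re(\zeta_K'/\zeta_K)(s)$ appears explicitly. Corollary~\ref{gr2} (where Stechkin has already been carried out) is then applied to that term alone, while the far-zero $F_0$-contributions are bounded by size via $|F_0|\ll|z|^{-2}$ and the zero-density estimate. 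Recombining $\Lo F_0+f(0)/(s-\,\cdot\,)=\Lo F$ over the pole and the zeros in $\mathcal R_{\epsilon,t}$ yields the stated inequality. In your setup the constants $\tfrac12\Lo$ and $b_K$ actually integrate to zero against $F$, so your residual ``$f(0)E(s)$ with $\Re E(s)=\tfrac12\Lo+O(n_K)$'' is not where the $\Lo$-term lives; it enters only through $-\Re(\zeta_K'/\zeta_K)(s)$.

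Second, your extension to $\sigma<1$ by ``analytic continuation'' fails: the conclusion is an inequality, not an identity, and Corollary~\ref{gr2} requires $1<\sigma\le 1+\epsilon$. The paper instead applies the inequality (valid for $\sigma>c>1$) to the shifted pair $g(t)=e^{\mu t}f(t)$, $G(z)=F(z-\mu)$, with $0\le\mu\le(1-\delta)x_0^{-1}\log\Lo$. Since $(N\mathfrak a)^{-s}g(\Lo^{-1}\log N\mathfrak a)=(N\mathfrak a)^{-(s-\mu/\Lo)}f(\Lo^{-1}\log N\mathfrak a)$, this yields the inequality for $f,F$ at the point $s-\mu/\Lo$, which now ranges over $\sigma>1-(1-\delta)(\log\Lo)/(x_0\Lo)$. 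The cost is $c(g)\ll\Lo^{1-\delta}(\log\Lo)^2$, absorbed into $C_3\epsilon f(0)\Lo$ for $d_K$ large.
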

An advantage of the inequality in Theorem \ref{smooth} over the one in Corollary \ref{gr2} is that it allows for a wide variety 
of functions $f$ and it allows  $\sigma$ to be chosen inside the critical strip.

We now state our zero-repulsion theorems. It is known that $\zeta_K(s)$ possesses at most one real zero
in a region close to one. 
For example, it was proven in \cite{LMO} that there exists a positive constant $R$ such that 
$\zeta_K(s)$ does not vanish in 
\begin{equation}
   \label{eq:kadiri}
    \Re (s) \ge 1- \frac{1}{R \Lo} \text{ and }  |\Im(s)| \le 1
\end{equation}
with the exception of  possibly one real zero $\beta_1$. 
Recently Kadiri \cite{K} proved that $R=12.74$ is a valid constant.   We now examine the
consequences of the existence of this possible exceptional zero. 
Let 
\[
   \beta_1 = 1- \lambda_1 \Lo^{-1} \text{ where } \lambda_1 > 0.
\]
Let $\varrho'=\beta'+i \gamma'$ be another zero satisfying 
\[
  \beta'= 1- \lambda' \Lo^{-1} \text{ where } \lambda' > 0 \text{ and }
  |\gamma'| \le 1. 
\]
We shall prove 
\begin{theorem} \label{dh} 
Let $\beta_1$ be an exceptional zero of $\zeta_K(s)$ satisfying $\lambda_1 < R^{-1}$. 
Let $\varrho'=\beta'+i \gamma'$ be another zero of $\zeta_K(s)$ satisfying $|\gamma'| \le 1$, $\lambda' < \frac{1}{13.85}(\log \Lo)$, and $\beta'$ is maximal with respect to these conditions.
If  $d_K$ is sufficiently large,
then 
\[
     \lambda' \ge 0.6546  \log (\lambda_{1}^{-1}).
\]
\end{theorem}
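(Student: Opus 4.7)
The plan is to apply the smoothed explicit formula of Theorem \ref{smooth} at a real point $s$ just below $\Re(s) = 1$, with a carefully designed non-negative test function $f$. Non-negativity of the prime-ideal sum on the LHS, together with non-negativity of $\Re F$ at the zeros of interest, will let us isolate the contributions of $\beta_1$ and the conjugate pair $\varrho', \bar{\varrho'}$, and the quantitative repulsion between $\lambda_1$ and $\lambda'$ should then emerge from the resulting inequality.

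First I would select $f \colon [0, x_0] \to \mathbb{R}_{\geq 0}$ with $f(0) \geq 0$ and a bounded continuous second derivative, where $x_0$ is chosen (independently of $\mathscr{L}$) to optimize the final constant. The hypothesis $\lambda' < (\log \mathscr{L})/13.85$, combined with the range $\sigma > 1 - (1-\delta)(\log \mathscr{L})/(x_0 \mathscr{L})$ permitted by Theorem \ref{smooth}, allows evaluation at $s = 1 - \mu \mathscr{L}^{-1}$ for $\mu$ up to $\lambda'$, provided $x_0 < 13.85$. The function $f$ would be further engineered so that its Laplace transform $F(z) = \int_0^{x_0} f(t) e^{-zt}\,\mathrm{d}t$ satisfies $\Re F(z) \geq 0$ throughout the strip containing the shifted zeros $(s - \varrho)\mathscr{L}$ for $\varrho \in \mathcal{R}_{\epsilon, 0}$; a Selberg--Beurling type construction $f = g * g$ with $g \geq 0$ is a natural candidate.

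With $f \geq 0$ and $s$ real, the LHS of Theorem \ref{smooth} is non-negative, and discarding all zeros in $\mathcal{R}_{\epsilon, 0}$ except $\beta_1, \varrho', \bar{\varrho'}$ via non-negativity of $\Re F$ would reduce the inequality to
\[
F(\lambda_1 - \mu) + 2 \Re F(\lambda' - \mu - i \gamma' \mathscr{L}) \;\leq\; F(-\mu) + f(0) \bigl(\phi + C_3 \epsilon \bigr).
\]
Because $f$ is compactly supported, $F(-\mu)$ grows like $e^{\mu x_0}$ up to polynomial factors and the mean value theorem gives $F(-\mu) - F(\lambda_1 - \mu) \approx \lambda_1 x_0 F(-\mu)$. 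For $\gamma' = 0$ the second term on the left is a positive quantity bounded below by a multiple of $F(\lambda' - \mu)$, while for $\gamma' \neq 0$ integration by parts on the Laplace integral shows that $\Re F(\lambda' - \mu - i \gamma' \mathscr{L})$ decays in $|\gamma'|\mathscr{L}$ and is therefore negligible.

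Choosing $\mu$ of order $\lambda'$ and passing to logarithms in the resulting exponential balance should give $\mu x_0 \gtrsim \log(\lambda_1^{-1})$, hence $\lambda' \geq (\log \lambda_1^{-1})/x_0$ up to lower-order terms. The hard part will be the simultaneous variational optimization of $f$ and $x_0$: we need $\Re F \geq 0$ on a wide complex strip (to safely drop the extraneous zeros), rapid exponential growth of $F$ at negative arguments (to capture the logarithmic dependence on $\lambda_1$), and small $f(0)$ (to minimize the $f(0)\phi \mathscr{L}$ penalty). Balancing these competing constraints is delicate, and the explicit constant $0.6546$ should emerge from an optimization in the spirit of Heath-Brown's treatment of the analogous phenomenon for Dirichlet $L$-functions \cite{HB}, suitably adapted to the Dedekind zeta setting via Theorem \ref{smooth}.
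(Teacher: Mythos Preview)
Your overall strategy---apply Theorem \ref{smooth}, exploit non-negativity of $f$ and of $\Re F$, then optimize the test function in the style of \cite{HB}---is the paper's strategy. The gap is in how you handle a complex zero $\varrho'$. You evaluate at a single \emph{real} point $s=1-\mu\Lo^{-1}$ and keep $\beta_1,\varrho',\bar{\varrho'}$, obtaining
\[
  F(\lambda_1-\mu)+2\Re F\bigl((\lambda'-\mu)-i\mu'\bigr)\ \le\ F(-\mu)+f(0)(\phi+C_3\epsilon),
\qquad \mu'=\gamma'\Lo .
\]
For this to say anything about $\lambda'$ you need a uniform \emph{lower} bound on $2\Re F((\lambda'-\mu)-i\mu')$. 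When $\gamma'=0$ and $\mu=\lambda'$ that term is $2F(0)$, fine. But for $\gamma'\neq 0$ you claim the term ``decays in $|\gamma'|\Lo$ and is therefore negligible''. This is the wrong direction: if the term is negligible you may discard it by Condition~2, and the inequality collapses to $F(\lambda_1-\mu)\le F(-\mu)+f(0)(\phi+C_3\epsilon)$, which is trivially true and contains no $\lambda'$. Moreover $\mu'=\gamma'\Lo$ need not be large; for $\gamma'$ of size $\Lo^{-1}$ it is bounded, and $\Re F(-i\mu')$ can be any value in $[0,F(0)]$. There is no uniform positive lower bound available, so a single real evaluation point cannot close the argument.

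The paper avoids this by evaluating $\mathcal{K}$ at \emph{two} points, $s=\beta'$ and $s=\beta'+i\gamma'$, and combining them via the trigonometric inequality $1+\cos x\ge 0$, i.e.\ $\mathcal{K}(\beta')+\mathcal{K}(\beta'+i\gamma')\ge 0$. At the complex point $s=\beta'+i\gamma'$ the zero $\varrho'$ contributes exactly $F(0)$, a fixed positive constant independent of $\gamma'$; at the real point only $\beta_1$ is retained. This yields the clean inequality (Lemma \ref{dh1})
\[
   2F(-\lambda')-2F(\lambda_1-\lambda')-F(0)+f(0)(2\phi+2C_3\epsilon)\ \ge\ 0,
\]
which no longer involves $\mu'$ at all. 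The subsequent optimization then splits into two regimes: a crude linear $f$ for very small $\lambda_1$ (Lemma \ref{dhsmall}), and Heath-Brown's family $f_{\lambda,\theta}$ for $\lambda_1\in[10^{-6},R^{-1}]$ with a numerical table (Lemma \ref{dhmed}); the constant $0.6546$ is the worst ratio $\lambda_b'/\log(b^{-1})$ from that table. Your variational discussion at the end is on the right track, but without the two-point $1+\cos$ device the starting inequality is not strong enough to optimize.
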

The first proof of this type, in the case of Dirichlet $L$-functions, is due to Linnik \cite{Lin}. 
His proof was complicated; it made use of Brun's sieve and convexity theorems for entire functions.   
It should be noted that Linnik's result  plays an important role in the proof that 
the least prime in an arithmetic progression modulo $q$ is $\ll q^{C_4}$ for some positive constant $C_4$. 
Knapowski \cite{Kn} simplified the argument by applying Turan's power sum method.
Later, Motohashi \cite{Mo} and Jutila \cite{Ju} independently showed that an argument related to Selberg's sieve led to better numerical results. 
Finally, Heath-Brown \cite{HB} made significant numerical improvements by employing a smoothed version of the explicit formula. 
His corresponding theorem for Dirichlet $L$-functions has a $2$ in place of our $0.6546 $ (see Lemma 8.1 and Table 2 of \cite{HB}).
He makes use of an explicit formula for Dirichlet $L$-functions like our Theorem \ref{smooth}. It turns out that the coefficient of $\log (\lambda_{1}^{-1})$ depends on how small $\phi$ is.
This was one of many ingredients in his proof that $C_4=5.5$ is valid. 
In \cite{LMO}, Lagarias, Montgomery, and Odlyzko proved an inexplicit version of Theorem \ref{dh}.
They used a smoothing through differentiation in conjunction with a variant of Turan's power sum method.
Instead, we shall prove the above theorem by following Heath-Brown's method.

\noindent
{\it Conventions and Notation}. 
We shall use extensively big $\mathcal{O}$ notation and Linnik's notation. 
For a complex number $A$ and a real number $B$ we shall use the notation $A=\mathcal{O}(B)$ and $A \ll B$ or $B \gg A$ to  mean
there exists $M >0$ such that $|A| \le M B$ for $A$ sufficiently large.

\section{Properties of the Dedekind zeta function}

The Dedekind zeta function of $K$ possesses the Euler product
\[
   \zeta_K(s) = \prod_{\mathfrak{p}} (1-(N \mathfrak{p})^{-s})^{-1} 
\]
where $\mathfrak{p}$ ranges over all prime ideals in $\mathcal{O}_K$ and $\Re(s) >1$.
It is convenient to consider the completed zeta function
\begin{align}  \label{def-xi}
\xi_K(s) & =s(s-1) (d_K)^{s/2}    \gamma_K(s)\zeta_K(s),   \\
\gamma_K(s)  & = ( \pi^{n_K}2^{2r_2} )^{-s/2} \Gamma(s/2)^{r_1} \Gamma(s)^{r_2}, \label{eq:gamK}
\end{align}
where $r_1$ and $r_2$ are the number of real and complex places in $K$.  
The benefit of working with $\xi_K$ is that it is entire of order 1, it satisfies 
the functional equation
\begin{equation}
  \label{eq:xife}
  \xi_K(s)=\xi_K(1-s),
\end{equation}
and its zeros are the non-trivial zeros of $\zeta_K(s)$. 

\section{Proof of Theorem \ref{nt} and Theorem \ref{gr}}

\begin{proof}[Proof of Theorem \ref{nt}]
Let $0 < \eta \le \frac{1}{2}$ and define \[\sigma_1=\frac{3}{2}+2\eta.\]
Throughout this proof we shall let $\theta_j$, for $j=1, \ldots, 4$, denote real numbers which satisfy 
$|\theta_j| \le 1$. 
We follow the argument of McCurley \cite{Mc} which generalized earlier arguments of 
Backlund and Rosser. 
Assume that $\pm T$ does not coincide with the ordinate of a zero. 
We consider the rectangle $\mathcal{R}$ with vertices $\sigma_1-iT,
\sigma_1+iT, 1-\sigma_1+iT,$ and $1-\sigma_1-iT$ where $\sigma_1 > 1$.
Since $\xi_K(s)$ is entire, the argument principle yields
\begin{equation*} 
    N_K(T) = \frac{1}{2 \pi}
    \Delta_{\mathcal{R}} \arg \xi_K(s). 
\end{equation*}
Let $\mathcal{C}$ be the part of the contour $\mathcal{R}$ in $\Re(s) \ge \frac{1}{2}$
and $\mathcal{C}_0$ the part of the contour in $\Re(s) \ge \frac{1}{2}$ and $\Im(s) \ge 0$.
By the functional equation and by the formula $\overline{\xi_{K}(s)}=\xi(\overline{s})$
it follows that 
\[
     \Delta_{\mathcal{R}} \arg \xi_K(s) =2    \Delta_{\mathcal{C}} \arg \xi_K(s)=
   4
    \Delta_{\mathcal{C}_0} \arg \xi_K(s)
\]
and therefore
\begin{equation}
    \label{eq:NKT}
      N_K(T) = \frac{2}{ \pi}
    \Delta_{\mathcal{C}_0} \arg \xi_K(s).   
\end{equation}
We write 
$\xi_K(s) = s B^{\frac{s}{2}}  \Gamma( \mbox{$\frac{s}{2}$})^{r_1}
   \Gamma(s)^{r_2} (s-1) \zeta_K(s)$ where $B=  \frac{d_K}{\pi^{n_K}2^{2r_2}}$.
   Hence
\begin{multline*}
 \Delta_{\mathcal{C}_0} \arg \xi_K(s) 
   =  
       \Delta_{\mathcal{C}_0}  \arg s +
   \Delta_{\mathcal{C}_0}  \arg
  B^{\frac{s}{2}}  
    + r_1  \Delta_{\mathcal{C}_0} \arg \Gamma( \mbox{$\frac{s}{2}$})
  +r_2  \Delta_{\mathcal{C}_0} \arg \Gamma(s)
 \\ + \Delta_{\mathcal{C}_0}  \arg \left( (s-1)\zeta_K(s)\right).  
\end{multline*}
A straightforward calculation yields
\begin{align*} 
 \Delta_{\mathcal{C}_0} \arg  s & = \arctan(2T), \\
    \Delta_{\mathcal{C}_0} \arg
  B^{\frac{s}{2}} & = \frac{T}{2} \log B = \frac{T}{2} \log \Big( \frac{d_K}{\pi^{n_K}2^{2r_2}} \Big).
\end{align*}
To compute $\Delta_{\mathcal{C}_0} \arg \Gamma(s) = \Delta_{\mathcal{C}_0} \left(\Im \log \Gamma(s)\right)$, we use Stirling's formula as given by  Olver \cite[p.\ 294]{Ol}
\begin{equation}
   \log \Gamma(z) = (z-\frac{1}{2}) \log z -z + \frac{\log 2 \pi}{2}
   + \frac{\theta}{6|z|}
   \label{eq:stirling}
\end{equation}
with $|\arg z| \le \frac{\pi}{2}$ and $|\theta| \le 1$.  
It follows, as in p.\ 268 of \cite{Mc}, that 
\begin{align*}
   \Big| \Im \log \Gamma \Big( \mbox{$\frac{1}{4}$}+i\frac{T}2\Big)
  - \frac{T}2 \log \Big(\frac{T}{2e} \Big) \Big| 
&   \le  \frac{T}4 \log\Big(1+\frac{1}{4T^2}\Big) + \frac14  \arctan(2T) + \frac{1}{3\sqrt{\frac{1}{4}+T^2}},
\\
   \Big| \Im \log \Gamma ( \mbox{$\frac{1}{2}$}+iT)
  - T\log \Big(\frac{T}{e}\Big) \Big| 
&   \le \frac{T}{2} \log \Big(1+\frac{1}{4 T^2} \Big) + \frac{1}{6\sqrt{\frac{1}{4}+T^2}}.
\end{align*}
As both functions on the right are decreasing for $T \ge 1$, it follows that 
\begin{align*}
  \Big| \Im \log \Gamma \Big( \mbox{$\frac{1}{4}$}+i\frac{T}2\Big)
  - \frac{T}2 \log \Big(\frac{T}{2e} \Big) \Big| 
&   \le 0.630716,
\\      \Big| \Im \log \Gamma \Big( \mbox{$\frac{1}{2}$}+iT \Big)
  - T\log \Big( \frac{T}{e} \Big) \Big| & \le 0.260643.
\end{align*}
Therefore
\begin{align*}
   \Delta_{\mathcal{C}_0} \arg \Gamma( \mbox{$\frac{s}{2}$}) 
    & =\frac{T}{2}\log \Big(\frac{T}{2e} \Big)
  +0.630716 \theta_1, \\
  \Delta_{\mathcal{C}_0} \arg \Gamma(s)
     & =T\log \Big(\frac{T}{e}\Big) + 0.260643 \theta_2. 
\end{align*}
Combining these facts, we obtain
\begin{multline*}
   \Delta_{\mathcal{C}_0} \arg \xi_K(s)  =      \arctan(2T) + \left( \frac{T}{2} \log(B) + r_1 \frac{T}{2} \log \Big( \frac{T}{2e}\Big)
   +r_2 T \log\Big( \frac{T}{e} \Big) \right)
  \\ +  0.630716 r_1 \theta_1 + 0.260643 r_2 \theta_2 +  \Delta_{\mathcal{C}_0} \arg  \left((s-1) \zeta_K(s)\right).
\end{multline*}
Since $r_1+2r_2=n_K$, we have
\[
  \frac{T}{2} \log(B) + r_1 \frac{T}{2}\log\Big( \frac{T}{2e} \Big)  +r_2 T\log \Big( \frac{T}{e} \Big)
  = \frac{T}{2}  \log \Big( d_K  \Big( \frac{T}{2 \pi e} \Big)^{n_K}   
  \Big).
\]
Combining  \eqref{eq:NKT} with the last two equations yields
\begin{equation}
  \label{eq:NKTid}
  N_K(T) = \frac{T}{\pi} \log \Big( d_K  \Big( \frac{T}{2 \pi e} \Big)^{n_K}   
  \Big) 
+ \frac{2}{\pi} \Delta_{\mathcal{C}_0} \arg \left((s-1) \zeta_K(s)\right)
+ \frac{1.261431 n_K}{\pi}\theta_3 +\theta_4.
\end{equation} 
In order to complete the argument we must bound
$ \frac{2}{\pi} \Delta_{\mathcal{C}_0} \arg \left( (s-1)\zeta_K(s)\right) $.  
We divide $\mathcal{C}_0$
into the contours $\mathcal{C}_1$ and  $\mathcal{C}_2$ as follows:
\begin{align*}
    \mathcal{C}_1:  \sigma_1 \text{ to } \sigma_1+iT
    \text{ and }
   \mathcal{C}_2:  \sigma_1+iT \text{ to } \frac{1}{2}+iT.  
\end{align*}
We begin with the argument change on $\mathcal{C}_1$.  If $\sigma > 1$
then 
\[
   |\arg \zeta_K(s)| \le 
   |\log \zeta_K(s)| \le \log \zeta_{K}(\sigma) \le n_K \log \zeta(\sigma) 
\]
and therefore, since $\sigma_1 = \frac{3}{2}+2\eta$,
\[
  |\Delta_{\mathcal{C}_1} \arg \zeta_K(s)| 
   \le n_K \log  \zeta( \tfrac{3}{2}+2 \eta). 
\]
In addition, 
$\Delta_{\mathcal{C}_1} \arg (s-1) = \arctan ( \frac{T}{\sigma_1-1} )= \arctan ( \frac{T}{2\eta+\frac12} )$ and
we deduce that 
\begin{equation}
  \label{eq:deltaC1}
  \frac{2}{\pi} |\Delta_{\mathcal{C}_1} \arg (s-1)\zeta_K(s)|
\le \frac{2 n_K}{\pi} \log  \zeta( \tfrac{3}{2}+2 \eta) +1. 
\end{equation}
We now bound the argument change on $\mathcal{C}_2$. 
Let $a(w)=(w-1)\zeta_K(w)$ and consider 
\begin{equation}
   \label{eq:fdefn}
   f(w) =\frac{1}{2}( a(w+iT)^N + a(w-iT)^N), \text{ where } N \in \mathbb{N}. 
\end{equation}
Note that 
\[
    f(\sigma) = \Re \, a(\sigma+iT)^N \text{ if } \sigma \in \mathbb{R}.
\]
Suppose $f(\sigma)$ has $n$ real zeros in the interval
$\frac{1}{2} \le \sigma \le \sigma_1$.  These zeros partition the interval
into $n+1$ subintervals.  On each of these subintervals $\arg a(\sigma+iT)^N$
can change by at most $\pi$, since $\Re \, a(\sigma +iT)^N$ is
nonzero on the interior of each subinterval.  It follows that 
\begin{equation}
   \label{eq:deltaC2}
   |\Delta_{\mathcal{C}_2} \arg a(s)|=
   \frac{1}{N} | \Delta_{\mathcal{C}_2} \arg a(s)^N|
   \le \frac{(n+1) \pi}{N}. 
\end{equation}
We now provide an upper bound for $n$.  Let $0 < \eta < \frac{1}{2}$ and  \[\sigma_0=1 +\eta.\]
Jensen's theorem asserts that 
\[
   \log|f(\sigma_0)| + \int_{0}^{1+2\eta} \frac{n(r)dr}{r} =  \frac{1}{2 \pi} \int_{-\frac{\pi}{2}}^{\frac{3 \pi}{2}}
    \log |f(\sigma_0+(1+2\eta) e^{i \theta}| d \theta,
\]
where $n(r)$ denotes the number of zeros of $f(z)$ in the circle centered at $\sigma_0$ of radius $r$. 
Observe that $n(r) \ge n$ for  $r \ge \frac{1}{2}+\eta$ and thus
\begin{equation}
    n \log 2 \le \frac{1}{2 \pi} \int_{-\frac{\pi}{2}}^{\frac{3 \pi}{2}}
    \log |f(\sigma_0+(1+2\eta) e^{i \theta}| d \theta
    - \log|f(\sigma_0)|. 
    \label{eq:jen}
\end{equation}
The next step is to provide an upper bound for the integral. 
Rademacher  proved an explicit version of the Phragm\'{e}n-Lindel\"{o}f Theorem.  
Theorem 4 of \cite{Ra} states that
\begin{equation*}
    |\zeta_K(w)|
     \le  3 \frac{|1+w|}{|1-w|} \mbox{$
     (d_K ( \frac{|w+1|}{2 \pi})^{n_K})^{\frac{1+\eta-\Re (w)}{2}}
     $} \zeta(1+\eta)^{n_K} 
\end{equation*}
uniformly for $-\eta \le \Re(w) \le 1+\eta$.  An examination of the proof reveals that 
the slightly stronger bound
\begin{equation}
   \label{eq:zbd2}
    |\zeta_K(w)|
     \le  3 \frac{|1+w|}{|1-w|} \mbox{$
     (d_K ( \frac{|w+1|}{2 \pi})^{n_K})^{\frac{1+\eta-\Re (w)}{2}}
     $} \zeta_K(1+\eta),
\end{equation}
holds for $-\eta \le \Re(w) \le 1+\eta$. Bound (7.1) in \cite{Ra} is $|\zeta_K(1+\eta+it)| \le \zeta(1+\eta)^{n_K}$
 for $\eta > 0$.  However, this may be replaced by 
\[
    |\zeta_K(1+\eta+it)| \le \zeta_K(1+\eta)
\]
for $\eta >0$ and this change in the argument immediately leads to \eqref{eq:zbd2}. 
It follows that, for $w=  \sigma_0 + (1+2\eta)e^{i \theta} $  with $\theta \in [\frac{\pi}{2}, \frac{3 \pi}{2}]$, 
\begin{equation}
   \label{eq:abd}
    |a(w\pm i T)|
     \le  3 |1+w \pm iT | \mbox{$
     (d_K ( \frac{|1+w\pm iT|}{2 \pi})^{n_K})^{-\frac{1}{2}(1+2\eta) \cos \theta}
     $} \zeta_K(1+\eta) .
\end{equation}
Since $T \ge 1$ and  $0< \eta \le \frac{1}{2}$,
\begin{equation}
  \label{eq:1wiTbd}
  |1+ w \pm iT|  \le  |1+\sigma_0 \pm iT| + 1+2\eta 
 = \sqrt{ T^2+(2+\eta)^2 } +1+ 2\eta
 \le \sqrt{T^2+ (\tfrac{5}{2})^2}+2
\end{equation}
and thus
\begin{equation}
    \label{eq:logabs}
      \log |1+ w \pm iT| 
        \le \log(b_1T),
\end{equation}
where 
\begin{equation}\label{def-C1}
b_1 = \sqrt{(1+(\tfrac{5}{2})^2)}+2=4.692582\ldots.
\end{equation} 
Putting together \eqref{eq:fdefn}, \eqref{eq:abd}, \eqref{eq:1wiTbd}, taking logarithms, and then applying \eqref{eq:logabs} 
gives
\begin{multline*}
   \log|f(\sigma_0 + (1+2 \eta)  e^{i \theta})|  
 \le - 
       \frac{N}{2}(1+2 \eta)(\cos\theta)  
       ( \Lo
       + n_K 
       \log (\tfrac{b_1 T}{2 \pi} ))   \\
 + N 
  \left( \log (3 b_1 T ) 
+ \log \zeta_K(1+\eta)\right),
\end{multline*}
valid for $\theta \in [\frac{\pi}{2}, \frac{3 \pi}{2}]$.
Applying this bound on the left-hand side of the contour in \eqref{eq:jen} and employing the integrals
$-\frac1{2\pi}\int_{\frac{\pi}{2}}^{\frac{3 \pi}{2}} (\cos \theta) \, d \theta =\frac1{\pi}$ 
and $\frac1{2\pi}\int_{\frac{\pi}{2}}^{\frac{3 \pi}{2}} \, d \theta = \frac12$, we find that
\begin{multline}
  \label{eq:left}
  \frac{1}{2 \pi} \int_{\frac{\pi}{2}}^{\frac{3 \pi}{2}}
  \log|f(\sigma_0 + (1+2 \eta)  e^{i \theta})| d \theta 
   \le  \frac{N}{2 \pi} 
       (1+2 \eta)
       ( \Lo 
       + n_K 
       \log (\tfrac{b_1 T}{2 \pi} ) 
      )   
\\  + \frac{N}{2} \log(3 b_1 T)  
+\frac{N}{2} \log \zeta_K(1+\eta). 
\end{multline}
For the right part of the contour in \eqref{eq:jen}, we shall make use of
the  bound
\begin{equation*}
   |f(\sigma_0+(1+2\eta)e^{i\theta})| \le (1+3\eta+T)^{N}\zeta_K(1+\eta)^{N}
\end{equation*}
valid for $\theta\in[-\pi/2,\pi/2]$.  This implies that
\begin{equation}
\label{eq:right}
   \frac{1}{2 \pi} \int_{-\frac{\pi}{2}}^{\frac{\pi}{2}}
    \log |f(\sigma_0+(1+2\eta) e^{i \theta}| d \theta 
    \le \frac{N}{2} \log(1+3 \eta +T)+\frac{N}{2} \log\zeta_K(1+\eta).
\end{equation}
Together with \eqref{eq:jen}, \eqref{eq:left}, and \eqref{eq:right}, we obtain
\begin{multline}\label{eq:nlog2}
  n  \log 2  \le 
  \frac{N}{2 \pi} 
       (1+2 \eta)
       ( \Lo
       + n_K 
       \log (\tfrac{b_1 T}{2 \pi} ) )      + \frac{N}{2}  \log(3 b_1 T) 
 +\frac{N}{2} \log(1+3 \eta +T) \\ + N \log \zeta_K(1+\eta)     - \log|f(1+\eta)|. 
\end{multline} 
To complete our bound for $n$, we require a lower bound for $\log|f(1+\eta)|$.
We write $a(1+\eta+iT)=re^{i \phi}$ and then choose (by Dirichlet's approximation theorem) a sequence
of $N$'s tending to infinity such that $N \phi$ tends to $0$ modulo 
$2 \pi$.   It follows that 
\begin{equation}
   \label{eq:limit}
   \lim_{N \to \infty} \frac{f(1+\eta)}{|a(1+\eta+iT)|^N}
   = 1 . 
\end{equation}
Note that, for $\sigma  > 1$, we have 
\begin{equation}
  \label{eq:zetaKlb}
  |\zeta_K(s)| = \prod_{\mathfrak{p}}|1-N(\mathfrak{p})^{-s}|^{-1}
  \ge \prod_{\mathfrak{p}} (1+ \frac{1}{N(\mathfrak{p})^{\sigma}})^{-1}
  = \frac{\zeta_{K}(2 \sigma)}{\zeta_K(\sigma)} 
\end{equation}
and $|1+\eta + iT-1| = \sqrt{\eta^2+T^2}$ so that
\begin{equation}
  \label{eq:a1etalb}
  |a(1+\eta+iT)| \ge  \sqrt{\eta^2+T^2}  \,  \frac{\zeta_{K}(2 +2\eta)}{\zeta_K(1+\eta)}.
\end{equation}
Thus we derive from \eqref{eq:limit}, \eqref{eq:zetaKlb}, and \eqref{eq:a1etalb} that
\[
\log |f(1+\eta)| \ge N\log\left( \sqrt{\eta^2+T^2}\, \frac{\zeta_K(2+2\eta)}{\zeta_K(1+\eta)} 
\right)+o(1) ,
\]
where the term $o(1)\rightarrow 0$ as $N\rightarrow \infty$. Equation \eqref{eq:nlog2} becomes
\begin{multline*}
  n  \log 2 \le   \frac{N}{2 \pi}        (1+2 \eta)       ( \Lo       + n_K        \log (\tfrac{b_1 T}{2 \pi} )       )    
  + \frac{N}{2} ( \log(3b_1 T) + \log(\tfrac{5}{2}+T) ) 
  \\ + N \log \zeta_K(1+\eta)   
    -  N \log  \frac{\zeta_{K}(2 +2 \eta)}{\zeta_K(1+\eta)}   -N \log\sqrt{\eta^2+T^2} +o(1) .
\end{multline*} 
We combine the third and fourth terms and then use the inequality
$
\frac{\zeta_K(\sigma)^2}{\zeta_K(2 \sigma)}
\le \Big(\frac{\zeta(\sigma)^2}{\zeta(2 \sigma)} \Big)^{n_K}
$
to obtain
\begin{multline*}
  n  \log 2  \le 
  \frac{N}{2 \pi} 
       (1+2 \eta)
       ( \Lo
       + n_K 
       \log (\tfrac{b_1 T}{2 \pi} ) 
      )    
    + \frac{N}{2} ( \log(3b_1 T) + \log(\tfrac{5}{2}+T) )
\\ +  N n_k \log \frac{\zeta (1 +\eta)^2}{\zeta (2+2\eta)}  
   -N \log\sqrt{\eta^2+T^2} + o(1)  .
\end{multline*} 
By the last inequality and by \eqref{eq:deltaC2}, we have 
\begin{multline} \label{eq:deltaC2b} 
 \frac{2}{\pi} |\Delta_{\mathcal{C}_2} \arg a(s) |
   \le 
  \frac{1+2 \eta}{\pi \log 2} 
       \left( \Lo
       + n_K 
       \log \left(\tfrac{b_1 T}{2 \pi} \right) 
      \right)   + \frac{1}{ \log 2} ( \log(3b_1 T) + \log(\tfrac{5}{2}+T) )
 \\+  \frac{2n_K}{\log 2} \log  
   \frac{\zeta(1 +\eta)^2}{\zeta(2+2\eta)}     
   - \frac{\log(\eta^2+T^2)}{\log 2}  + o(1)  
\end{multline} 
where $o(1)\rightarrow 0$ as $N\rightarrow \infty$.
We let $N \to \infty$ and combine the results obtained for $\mathcal{C}_1$ in \eqref{eq:deltaC1} and for $\mathcal{C}_2$ in \eqref{eq:deltaC2b}:
\begin{multline*}
 \frac{2}{\pi} | \Delta_{\mathcal{C}_0} \arg (s-1) \zeta_K(s)  |  
 \le  \frac{1+2 \eta}{\pi \log 2}  \left( \Lo + n_K  \log \left(\tfrac{b_1 T}{2 \pi} \right)  \right)    
  + \frac{ \log(3b_1 T) + \log(\tfrac{5}{2}+T) }{\log 2}
\\ +  \frac{2n_K}{\log 2} \log  \frac{\zeta(1 +\eta)^2}{\zeta(2+2\eta)}  
   - \frac{\log(\eta^2+T^2)}{\log 2} + \frac{2n_K}{\pi} \log  \zeta( \tfrac{3}{2}+2 \eta) +1.
\end{multline*}
Inserting this in \eqref{eq:NKTid} yields
\begin{align*}
  | N_K(T)-\frac{T}{\pi} \log \Big( d_K  \Big( \frac{T}{2 \pi e} \Big)^{n_K}   
  \Big) | &  \le  
  c_1(\eta)(  \Lo  + n_K \log T) + c_2(\eta) n_K + g(T),
\end{align*}
where 
\begin{align}
   c_1(\eta) & =  \frac{1+2 \eta}{\pi \log 2},  \\
   c_2(\eta) & =  \frac{1+2 \eta}{\pi \log 2} \log \Big( \frac{b_1}{2 \pi} \Big)
    + \frac{2}{\log 2} \log  
   \frac{\zeta(1 +\eta)^2}{\zeta(2+2\eta)} 
   + \frac{2}{\pi} \log  \zeta( \tfrac{3}{2}+2 \eta)
+ \frac{1.261431}{\pi}, \\
  g(T) & = \frac{1}{\log 2} \left( \log(3b_1 T) + \log(\tfrac{5}{2}+T) 
   - \log(\eta^2+T^2)\right) + 2.
\end{align}
Observe that 
\[
   c_2(\eta) 
 = b_2-b_3\eta  
    + \frac{2}{\log 2} \log \frac{\zeta(1+\eta)^2}{\zeta(2 +2 \eta)}  
    +\frac{2}{\pi} \log \zeta( \tfrac{3}{2}+ 2 \eta) 
\]
where $b_1$ is defined in \eqref{def-C1}, and
\begin{align}
& \label{def-C2}
  b_2 =  \frac{\log(\tfrac{b_1}{2 \pi})}{\pi (\log 2)} + \frac{1.261431}{\pi} 
  = 0.267481 \ldots, \\
&  \label{def-C3}
b_3  = - \frac{2 \log(\tfrac{b_1}{2 \pi})}{\pi \log 2} = 0.268089  \ldots \ . 
  \end{align}
Since $0 < \eta \le \frac{1}{2}$, we have that, for $T\ge 1$, 
\[
  g(T) 
                  \le \frac{1}{\log 2} \log(1+ \tfrac{5}{2T}) + 2 + \frac{\log(3b_1)}{\log 2}  
                =  7.622699\ldots .
\]
\end{proof}
We now move on to the proof of Theorem \ref{gr}. First, we require a couple of lemmas 
on some real-valued functions. 
For $a, b, c, x \in \mathbb{R}$, define
\begin{equation}
  \label{eq:g}
  g(a,b,c;x) := \kappa \Big( 
  \frac{a}{a^2+x^2} + \frac{b}{b^2+x^2}
  \Big) - \frac{c}{c^2+x^2},
\end{equation}
with $\kappa=\frac1{\sqrt5}$.
\begin{lemma} \label{gbds}
Let $a_0 = \frac{\sqrt{5}-1}{2}, b_0 = \frac{1+\sqrt{5}}{2}$, and $c_0 =1$.  \\
(i) The inequality 
\[
   -0.121585 \ldots \le  g(a_0,b_0,c_0;x) \le 0  
\]
is valid for all $x \in \mathbb{R}$. \\
(ii) Let $0 < \epsilon \le 10^{-2}$ and let $a,b,c \in \mathbb{R}$.  
If $|a-a_0| < 2 \epsilon $, $|b-b_0| <  2 \epsilon$, and $|c-c_0| <  2 \epsilon$,
then 
\[
      -0.121585 \ldots -5 \epsilon \le  g(a,b,c;x) \le 5 \epsilon. 
\]
\end{lemma}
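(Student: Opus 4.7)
\textbf{Proof proposal for Lemma \ref{gbds}.} For part (i), the key observation is that $a_0$ and $b_0$ are the positive roots of $t^2 - \sqrt{5}\,t + 1 = 0$, so
$$a_0 b_0 = 1, \quad a_0 + b_0 = \sqrt{5}, \quad a_0^2 + b_0^2 = (a_0+b_0)^2 - 2a_0b_0 = 3.$$
I would first combine the two $\kappa$-terms by putting them over a common denominator:
$$\frac{a_0}{a_0^2+x^2} + \frac{b_0}{b_0^2+x^2} = \frac{(a_0+b_0)(a_0 b_0 + x^2)}{(a_0^2+x^2)(b_0^2+x^2)} = \frac{\sqrt{5}(1+x^2)}{1+3x^2+x^4}.$$
Multiplying by $\kappa = 1/\sqrt5$ and subtracting $1/(1+x^2)$ collapses the whole expression into
$$g(a_0,b_0,c_0;x) = \frac{1+x^2}{1+3x^2+x^4} - \frac{1}{1+x^2} = \frac{-x^2}{(1+x^2)(1+3x^2+x^4)},$$
which is manifestly $\le 0$ with equality at $x=0$, yielding the upper bound in (i).

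For the lower bound, I would set $y = x^2 \ge 0$ and maximize $h(y) = y/[(1+y)(1+3y+y^2)]$ on $[0,\infty)$. Differentiating and clearing denominators, the critical equation becomes $2y^3 + 4y^2 - 1 = 0$, which has a unique positive real root $y^\star$. I would locate $y^\star$ rigorously by intermediate value (sign changes of the cubic on a short interval around $0.4514$) and then bound $-h(y^\star)$ from below to give the numerical value $-0.121585\ldots$.

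For part (ii), I would view $g(a,b,c;x)$ as a function of the parameters with $x$ fixed and apply the mean value theorem. Writing $\phi(a,x) = a/(a^2+x^2)$, one has the uniform estimate
$$\left|\frac{\partial \phi}{\partial a}(a,x)\right| = \frac{|x^2-a^2|}{(a^2+x^2)^2} \le \frac{1}{a^2+x^2} \le \frac{1}{a^2},$$
and similarly for the $b$- and $c$-derivatives. Hence
$$|g(a,b,c;x) - g(a_0,b_0,c_0;x)| \le \kappa\!\left(\frac{2\epsilon}{(a_0-2\epsilon)^2} + \frac{2\epsilon}{(b_0-2\epsilon)^2}\right) + \frac{2\epsilon}{(c_0-2\epsilon)^2}.$$
Using $\epsilon \le 10^{-2}$ together with $a_0 \approx 0.618$, $b_0 \approx 1.618$, $c_0 = 1$, the three contributions are approximately $2.50\epsilon$, $0.35\epsilon$, and $2.09\epsilon$, summing to less than $5\epsilon$. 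Combining with part (i) then gives the two-sided bound in (ii).

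The main obstacle I anticipate is the rigorous numerical verification of the minimum in part (i): one must pin down $y^\star$ within a small enough interval and then evaluate $-h$ tightly enough to justify the constant $0.121585\ldots$ to six significant figures (e.g.\ by exhibiting a short rational interval for $y^\star$ and using monotonicity of $-h$ on each side of it). Part (ii) is straightforward calculus, but the three constants must be tracked carefully because they only just fit below $5\epsilon$; in particular the $c$-term alone contributes slightly more than $2\epsilon$, so one cannot afford much slack in any of the estimates.
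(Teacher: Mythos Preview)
Your proposal is correct. Part (ii) is essentially identical to the paper's argument: the paper computes the difference $\frac{u}{u^2+x^2}-\frac{u_0}{u_0^2+x^2}$ directly and bounds it by $|u-u_0|/\min(|u|,|u_0|)^2$, which yields exactly the same expression $\kappa\bigl(\tfrac{2\epsilon}{(a_0-2\epsilon)^2}+\tfrac{2\epsilon}{(b_0-2\epsilon)^2}\bigr)+\tfrac{2\epsilon}{(c_0-2\epsilon)^2}<5\epsilon$ that you reach via the mean value theorem.

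Part (i) is where you take a genuinely different and cleaner route. The paper differentiates $g(a_0,b_0,c_0;x)$ as it stands, obtains a degree-eight numerator $Ax^8+Bx^6+Cx^4+Dx^2+E$, checks that $A=D=0$, $B=2$, $C=4$, $E=-1$, and then locates the positive root of $2x^6+4x^4-1$. Your algebraic simplification using $a_0b_0=1$, $a_0+b_0=\sqrt5$, $a_0^2+b_0^2=3$ to obtain the closed form
\[
g(a_0,b_0,c_0;x)=\frac{-x^2}{(1+x^2)(1+3x^2+x^4)}
\]
makes the upper bound $g\le 0$ immediate, and after the substitution $y=x^2$ reduces the extremal problem to the cubic $2y^3+4y^2-1=0$, which is the same critical equation the paper arrives at (their root $\beta=0.672016\ldots$ satisfies $\beta^2=y^\star$). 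So the two approaches converge on the same numerical verification; yours simply gets there with less computation and makes the nonpositivity transparent.
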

\begin{proof} (i) Differentiating, we find that 
\begin{multline*}
   g'(a_0,b_0,c_0;x)
 \\ = 2x \frac{  -\kappa a_0 (b_0^2+x^2)^2(c_0^2+x^2)^2 - \kappa b_0 (a_0^2+x^2)^2(c_0^2+x^2)^2 + c_0 (a_0^2+x^2)^2(b_0^2+x^2)^2 }{(a_0^2+x^2)^2 (b_0^2+x^2)^2(c_0^2+x^2)^2} .
\end{multline*}
The polynomial in the numerator is of the form $Ax^8 + Bx^6+Cx^4+Dx^2+E$, and it may be checked that $A=D=0,\ B=2,\ C=4$, and $E=-1$.
Observe that the polynomial $2x^6+4x^4-1$ has one positive real root  $\beta = 0.672016 \ldots $.  It follows from calculus that 
\[ 
 0 \ge g(a_0,b_0,c_0;x) \ge g(a_0,b_0,c_0;\beta)= -0.121585 \ldots .
\]
(ii) We begin by considering the difference 
\begin{multline*}
   g(a,b,c;x) - g(a_0,b_0,c_0;x) \\ = 
\kappa \Big( 
  \frac{a}{a^2+x^2}-\frac{a_0}{a_{0}^2+x^2} + \frac{b}{b^2+x^2}-\frac{b_0}{b_{0}^2+x^2}
  \Big) - \frac{c}{c^2+x^2} + \frac{c_0}{c_{0}^2+x^2}. 
\end{multline*}
For real numbers $u$ and $u_0$, we have that 
\[
   \Big| \frac{u}{u^2+x^2} - \frac{u_0}{u_{0}^2 +x^2} \Big|
  =  \Big| \frac{(u-u_0)(x^2-uu_0)}{(u^2+x^2)(u_{0}^2+x^2)} \Big|
  \le \frac{|u-u_0|}{\min(|u|,|u_0|)^2}. 
\]
Using this bound, the triangle inequality implies that  
\begin{multline*}
  |g(a,b,c;x) - g(a_0,b_0,c_0;x)| 
 \le  
\kappa \Big(
 \frac{2\epsilon}{(a_0 -2 \epsilon)^2}
+ \frac{2 \epsilon}{(b_0 -2 \epsilon)^2}
\Big) 
 + \frac{2 \epsilon}{(c_0 - 2 \epsilon)^2} 
 \\ \le 2\epsilon \Big( 
   \frac{ \kappa}{(a_0 -2 \cdot 10^{-2})^2}
+ \frac{ \kappa}{(b_0 -2 \cdot 10^{-2})^2}
 + \frac{1}{(c_0 -2 \cdot 10^{-2})^2} 
\Big)  
 < 5 \epsilon.
\end{multline*}
The above combined with (i) yields (ii).
\end{proof}
Let $s = \sigma + it$ and  $s_1= \sigma_1 + it$  
where $\sigma_1 = \frac{1}{2}(1+\sqrt{1+4 \sigma^2})$.   For $a=0,1$, define the function 
\[
  f_a(\sigma,t) = \frac{1}{2}  \Re
  \left(
  \frac{\Gamma'}{\Gamma} \left( \frac{s+a}{2} \right) - \frac{1}{\sqrt{5}} 
   \frac{\Gamma'}{\Gamma} \left( \frac{s_1+a}{2} \right)
  \right).
\]
In order to abbreviate notation, we set $\psi(z) = \frac{\Gamma'}{\Gamma}(z)$. 
We shall prove
\begin{lemma}
Let $\epsilon>0$,  $\sigma \in [1,1+\epsilon]$, and $|t|\in [0,1]$. Then
\[
f_a(\sigma,t)  \le  C_{a}(\epsilon),
\]
where 
\begin{equation}\label{eq:Caepsilon}
 C_{a}(\epsilon) =   f_a(1,1) + \epsilon \Big(\frac14 S\Big(\frac{1+a}2,\frac{1}2\Big) +  \frac{1}{2\sqrt{5}}\frac{1+\epsilon}{ \sqrt{1+4(1+\epsilon)^2}} S\Big(\frac{\frac{1+\sqrt5}2+a}2,\frac{1}2\Big) \Big)
\end{equation}
and
\begin{equation}\label{def-S}
S(x,y)=\sum_{n=0}^{\infty} \frac1{(x+n)^2+y^2}  .
\end{equation}
\end{lemma}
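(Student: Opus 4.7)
The plan is to split
\[
f_a(\sigma,t)-f_a(1,1) = \bigl[f_a(\sigma,t)-f_a(\sigma,1)\bigr] + \bigl[f_a(\sigma,1)-f_a(1,1)\bigr]
\]
and handle the two brackets separately: the first is nonpositive by a monotonicity in $|t|$, and the second is bounded by an $\epsilon$-multiple of the coefficient in \eqref{eq:Caepsilon} by a fundamental-theorem-of-calculus argument in $\sigma$.

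For the first bracket, I would start from the Mittag--Leffler expansion $\Re \psi(u+iv) = -\gamma + \sum_{n\ge 0}\bigl(\tfrac{1}{n+1} - \tfrac{n+u}{(n+u)^2+v^2}\bigr)$, which is manifestly even in $v$. Subtracting values at $v=t/2$ and $v=1/2$ telescopes to
\[
\Re \psi(u+it/2) - \Re \psi(u+i/2) = \tfrac{t^2-1}{4}\sum_{n\ge 0}\frac{n+u}{[(n+u)^2+t^2/4][(n+u)^2+1/4]}.
\]
Abbreviating the sum as $T(u;t)$ and taking $u=(\sigma+a)/2$, $u_1=(\sigma_1+a)/2$, this yields
\[
f_a(\sigma,t)-f_a(\sigma,1) = \tfrac{t^2-1}{8}\Bigl[T(u;t) - \tfrac{1}{\sqrt 5}T(u_1;t)\Bigr].
\]
Since $|t|\le 1$ the prefactor is $\le 0$, so it suffices to verify $T(u;t)\ge \tfrac{1}{\sqrt 5}T(u_1;t)$. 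An elementary calculus check: the function $g(x) = x/[(x^2+1/4)(x^2+t^2/4)]$ has $g'(x)$ with the sign of $-3x^4 - \tfrac{1+t^2}{4}x^2 + \tfrac{t^2}{16}$, which equals $-1/4$ at $x=1/2$ and is decreasing in $x$ for $x>0$. Hence $g$ is decreasing on $[1/2,\infty)$, uniformly in $|t|\le 1$, so $T(\cdot;t)$ is termwise decreasing on $[(1+a)/2,\infty)$. Since $u\le u_1$, this gives the (stronger) inequality $T(u;t)\ge T(u_1;t)$, and hence the first bracket is $\le 0$.

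For the second bracket I would apply the fundamental theorem of calculus. Cauchy--Riemann gives $\partial_\sigma\Re \psi((\sigma+a)/2+i/2) = \tfrac{1}{2}\Re \psi'((\sigma+a)/2+i/2)$, and implicit differentiation of $\sigma_1(\sigma_1-1)=\sigma^2$ yields $d\sigma_1/d\sigma = 2\sigma/\sqrt{1+4\sigma^2}$. Hence
\[
\partial_\sigma f_a(\tau,1) = \tfrac{1}{4}\Re \psi'\!\bigl(\tfrac{\tau+a}{2}+\tfrac{i}{2}\bigr) - \tfrac{1}{2\sqrt 5}\cdot\tfrac{\tau}{\sqrt{1+4\tau^2}}\Re \psi'\!\bigl(\tfrac{\tau_1+a}{2}+\tfrac{i}{2}\bigr).
\]
The crude estimate $|\Re \psi'(x+iy)|\le \sum_{n\ge 0}\bigl((n+x)^2+y^2\bigr)^{-1} = S(x,y)$ dominates each term. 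Exploiting the monotone decrease of $S(\cdot,1/2)$ in its first argument (to replace $\tau\mapsto 1$ and $\tau_1\mapsto (1+\sqrt 5)/2$) and the monotone increase of $\tau\mapsto\tau/\sqrt{1+4\tau^2}$ (to replace $\tau\mapsto 1+\epsilon$) yields a uniform bound on $|\partial_\sigma f_a(\tau,1)|$ over $\tau\in[1,1+\epsilon]$; integrating from $1$ to $\sigma$ produces precisely the $\epsilon$-term displayed in \eqref{eq:Caepsilon}.

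The main obstacle is the $|t|$-monotonicity step: once the quartic sign analysis is in place, the remaining ingredients -- Cauchy--Riemann, monotonicity of $S$ and of $\tau/\sqrt{1+4\tau^2}$, and the derivative of $\sigma_1$ -- are standard and purely mechanical.
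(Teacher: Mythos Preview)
Your proof is correct and follows the same two-step decomposition as the paper: first reduce to $t=1$ by showing $f_a(\sigma,t)\le f_a(\sigma,1)$, then bound $f_a(\sigma,1)-f_a(1,1)$ via the mean value theorem (your fundamental-theorem-of-calculus formulation is equivalent). The second step---computing $\partial_\sigma f_a(\sigma,1)$ with $d\sigma_1/d\sigma=2\sigma/\sqrt{1+4\sigma^2}$, bounding $|\Re\psi'|$ by $S$, and using the monotonicities of $S(\cdot,1/2)$ and $\tau/\sqrt{1+4\tau^2}$---is identical to the paper's.

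The monotonicity step is where you diverge. The paper proves the stronger statement $\partial_t f_a(\sigma,t)>0$ by expanding $2f_a=-\gamma(1-\tfrac{1}{\sqrt5})+\sum_n g_n$ and checking that $\partial_y g_n$ has numerator $Q_n(y)=Ay^4+By^2+C$ with $A,B,C>0$; this requires verifying inequalities such as $\sqrt5\,x_1>x_2$ that genuinely use the Stechkin constant $\tfrac{1}{\sqrt5}$. Your direct two-point comparison instead shows $T(u;t)\ge T(u_1;t)$ termwise, via the decrease of $x/[(x^2+\tfrac14)(x^2+\tfrac{t^2}{4})]$ on $[1/2,\infty)$. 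This is cleaner---it avoids the $\sqrt5$ arithmetic and actually proves a stronger termwise inequality (the factor $1/\sqrt5$ is not needed)---at the modest cost of yielding only the comparison $f_a(\sigma,t)\le f_a(\sigma,1)$ rather than full monotonicity in $t$, which is all the lemma requires.
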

Note that for $\epsilon = 0.15$, a Maple calculation gives
\[
  f_a(\sigma, t) \le -0.088955  \text{ for } 1 \le \sigma \le 1.15
\]
which improves McCurley's bound \cite{Mc2} $f_a(\sigma, t) \le -0.0390$ in the same range.  
On the other hand, if we take $\epsilon =0.01$, we obtain 
\begin{equation}
   \label{eq:fsigmatbound}
  \begin{cases}
f_0(\sigma,t)  \le  C_{0}(0.01) <-0.303931,\\ 
f_1(\sigma,t)  \le  C_{1}(0.01) <-0.153758 ,
  \end{cases}
 \text{ for } 1 \le \sigma \le 1.01. 
\end{equation}
\begin{proof}
The lemma shall be proved as follows.  First observe that $f_a(\sigma,t)$ is even in $t$ and thus we may assume that $t \ge 0$. 
The first step is to show that, for fixed $\sigma$,
$f_a(\sigma,t)$ increases with $t$ and thus $f_a(\sigma, t) \le f_a(\sigma, 1)$. 
Then we use the mean value theorem to write $f_a(\sigma,1)=f_a(1,1) +(\sigma-1) 
\cdot  \left. \frac{\partial}{\partial \sigma} f_a(\sigma,1) \right|_{\sigma=\theta}$ for $\theta \in [1,1+\epsilon]$. 
These combine to give the bound 
\begin{equation}
\label{eq-MVT} 
f_a(\sigma, t) \le f_a(1,1) +\epsilon \max_{1 \le \sigma \le 1+\epsilon}  \Big|  \frac{\partial}{\partial \sigma} f_a(\sigma,1) \Big|.
\end{equation}
A Maple calculation gives that $f_a(1,1) = -0.312948 \ldots $ if $a=0$, and $=-0.158361 \ldots $ if $ a=1 $.
The final step is to establish a bound for the above maximum term.
We now show that $f_a(\sigma, t)$ increases with $t$. 
We have the identity
\[
  \frac{\Gamma'}{\Gamma}(x+iy) = - \gamma - \frac{1}{x+iy} + 
  \sum_{n=1}^{\infty} \Big( \frac{1}{n} - \frac{1}{n+x+iy} \Big) \text{ for } x \ge 0. 
\]
Setting 
\[
   x_1 = x_1(\sigma)= \frac{\sigma+a}{2},\  x_2 = x_2(\sigma)=\frac{\sigma_1+a}{2}, \text{ and } y = \frac{t}{2},
\]
it follows that 
\begin{align*}
  2f_a(\sigma,t) & =  -\gamma \Big(1-\frac{1}{\sqrt{5}} \Big) + g_0(\sigma,y) + \sum_{n=1}^{\infty} g_n(\sigma,y)
\end{align*}
where 
\begin{align*}
 g_0(\sigma,y) & = - \frac{x_1}{x_1^2 + y^2} + \frac{1}{\sqrt{5}} \frac{x_2}{x_2^2+y^2}, \\
 g_n(\sigma,y) & = \frac{1}{n} \Big(1-\frac{1}{\sqrt{5}} \Big)
- \frac{n+x_1}{(n+x_1)^2+y^2}+ \frac{1}{\sqrt{5}} \frac{n+x_2}{(n+x_2)^2+y^2} \text{ for } n \ge 1. 
\end{align*}
It suffices to prove that $g_n(\sigma, y)$ is increasing in $y$
for $n \in \mathbb{N} \cup \{ 0 \}$.
We have that 
\begin{align*}
 \frac{\partial}{\partial y} g_{n}(\sigma, y) & = 2y \Big( 
 \frac{(n+x_1)}{((n+x_1)^2 +y^2)^2}- \frac{(n+x_2)}{\sqrt{5} ((n+x_2)^2+y^2 )^2}
 \Big) .
\end{align*}
Simplifying, we find that  $\frac{\partial}{\partial y} g_n(\sigma, y) =  \frac{2yQ_n(y)}{\sqrt{5}((n+x_1)^2 +y^2)^2((n+x_2)^2 +y^2)^2}$,
where 
\[
  Q_n(y) = \sqrt{5}(n+x_1)((n+x_2)^2 +y^2)^2 - (n+x_2)((n+x_1)^2 +y^2)^2.
\]
It suffices to prove that $Q_n(y)$ is positive. 
Observe that $Q_n(y)=Ay^4+By^2+C$ where
\begin{align*}
  A & = \sqrt{5}(n+x_1)-(n+x_2),\\
  B & = 2(n+x_1)(n+x_2)\Big( \sqrt{5}(n+x_2) - (n+x_1) \Big), \\
  C & = (n+x_1)(n+x_2) \Big( \sqrt{5}(n+x_2)^3-(n+x_1)^3 \Big). 
\end{align*}
It is easy to see that $A$ is positive if   $\sqrt{5}x_1-x_2$ is positive,
$B$ is positive if  $\sqrt{5}x_2-x_1$ is positive, and $C$ is positive if  
$5^{\frac{1}{6}}x_2-x_1$ is positive. 
Note that
\[
  \sqrt{5}x_1-x_2 = \sqrt{5} \Big( \frac{\sigma+a}{2} \Big) - \frac{\sigma_1+a}{2}
  = \frac{\sqrt{5} \sigma - \sigma_1}{2}+ \frac{a}{2}(\sqrt{5}-1) >0
\]
since $\sqrt{5} \sigma > \sigma_1$. 
Since $x_1 < x_2$ for $\sigma > 1$, it follows that $5^{1/j}x_2 - x_1 >0$ for $j \in \mathbb{N}$. 
Thus $A,B,$ and $C$ are positive and it follows that 
$\frac{\partial}{\partial t} f_a(\sigma,t) > 0$ as desired. 

In order to finish the proof, we will bound $ \frac{\partial}{\partial \sigma} f_a(\sigma,1)$. Observe that 
\[
 f_a(\sigma,1) = \frac12 \Re\Big( \psi\Big(\frac{\sigma+a+i}2\Big) -\frac1{\sqrt5} \psi\Big(\frac{\frac{1+\sqrt{1+4\sigma^2}}2+a+i}2\Big) \Big).
\]
Thus
\[
\frac{\partial f_a(\sigma,1)}{\partial{\sigma}}
= 
 \frac12 \Re\Big( \frac12 \psi' \Big(\frac{\sigma+a+i}2\Big) -  \frac{\sigma}{ \sqrt{5}  \sqrt{1+4\sigma^2}}\psi' \Big(\frac{\frac{1+\sqrt{1+4\sigma^2}}2+a+i}2\Big) \Big).
\]
It is well known that $  \psi'(z) = \sum_{n=0}^{\infty} (n+z)^{-2}$ and thus  
$
\left| \psi'(z) \right| \le S(\Re(z),\Im(z)),
$
where $S$ is given in \eqref{def-S} and decreases with $\Re (z)$.
Since  $\frac{\sigma}{\sqrt{1+4\sigma^2}}$ increases with $\sigma$,
we have
\[
\Big| \frac{\partial f_a(\sigma,1)}{\partial{\sigma}} \Big|
 \le 
 \frac14 S\Big(\frac{1+a}2,1/2\Big) +  \frac{1}{2\sqrt{5}}\frac{1+\epsilon}{ \sqrt{1+4(1+\epsilon)^2}} S\Big(\frac{\frac{1+\sqrt5}2+a}2,1/2\Big).
\]
We combine the above together with \eqref{eq-MVT} to obtain the bound $C_{a}(\epsilon)$ as given by \eqref{eq:Caepsilon}.
\end{proof}
With the previous lemmas in hand, we may now prove Theorem \ref{gr}. 
\begin{proof}[Proof of Theorem \ref{gr}]
Let $s=\sigma +it$, $0<\epsilon \le 10^{-2}$, and assume $1 < \sigma \le 1+\epsilon$.   
Recall that $\mathcal{R}_{\epsilon,t}$ was defined to be the set of $\varrho$ which satisfy 
$1-\epsilon \le \Re(\varrho) <1$ and $|\Im(\varrho) - t| \le 1$. 
Recall that $\xi_K(s)$ is defined by (\ref{def-xi}) and it is 
entire and of order one.  By the Hadamard-Weierstrass factorization theorem 
\[
  \xi_{K}(s) = e^{A+Bs} \prod_{\varrho} \Big(1-\frac{s}{\varrho} \Big) e^{\frac{s}{\varrho}}
\]
where $\varrho$ ranges through the non-trivial zeros of $\zeta_K(s)$. Logarithmically differentiating this expression and employing $\xi_K(s)=\xi_K(1-s)$
leads to the global formula
\begin{equation}\label{explicit-formula}
-\Re \frac{\zeta_K'}{\zeta_K}(s) = - \sum_{\varrho} \Re 
 \Big( \frac1{s-\varrho} \Big) + \frac12\Lo + \Re \Big( \frac1s \Big) + \Re  \Big( \frac1{s-1}  \Big) + \Re \frac{\gamma'_K}{\gamma_K}(s).
\end{equation}
For full details of the derivation see equation (5.9) and Lemma 5.1 of  \cite{LO} or \cite[p.\ 965]{Sta}.
We now employ Stechkin's differencing method.  
Let $\kappa = \frac{1}{\sqrt{5}}$, 
$\sigma_1=\frac{1}{2}\left(\sqrt{1+4 \sigma^2}+1\right)$, and $s_1 = \sigma_1 + it$.  Consider the difference
\begin{multline}\label{Stechkin-explicit-formula}
-\Re \left( \frac{\zeta_K'}{\zeta_K}(s)-\kappa \frac{\zeta_K'}{\zeta_K}(s_{1})\right) =  
-\sum_{\varrho} \Re \left(\frac{1}{s-\varrho}-\kappa\frac1{s_{1}-\varrho}\right)  
+ \frac{1-\kappa}2\Lo
\\+\Re\left(\frac{1}{s} + \frac{1}{s-1}-\frac{\kappa}{s_{1}} - \frac{\kappa}{s_{1}-1}\right)  + \Re\left( \frac{\gamma'_K}{\gamma_K}(s) -\kappa \frac{\gamma'_K}{\gamma_K}(s_{1}) \right).
\end{multline}
The benefit of this formula is that the coefficient of $\Lo$ has been reduced from $\frac{1}{2}$ to  $\frac{1-\kappa}{2}$. 
This argument allowed Stechkin to obtain an improved explicit zero-free region for $\zeta(s)$. 
Next observe that if $\varrho$ is a zero of $\zeta_K(s)$ then so is $1-\overline{\varrho}$. 
It follows that for $w \in \mathbb{C}$
\[
    \sum_{\varrho} \Re \Big( \frac{1}{w-\varrho}  \Big) =
    \sideset{}{'} \sum_{\beta \ge \frac{1}{2}}  
    \Re \Big(
    \frac{1}{w-\varrho} + \frac{1}{w-1+\overline{\varrho}}
    \Big),
\] 
where $\sideset{}{'} \sum_{\beta \ge \frac{1}{2}} $ means that those terms with $\beta=\frac{1}{2}$ are counted with 
weight one-half. 
Define, for complex $s$ and $z$, 
\[
   D(s,z) = \Re (  (s-z)^{-1} + (s-1+\overline{z})^{-1}).
\]
Therefore 
\begin{multline}\label{Stechkin-explicit-formulab}
-\Re \left( \frac{\zeta_K'}{\zeta_K}(s)-\kappa \frac{\zeta_K'}{\zeta_K}(s_{1})\right) =  
-\sideset{}{'} \sum_{\beta \ge \frac{1}{2}}   (D(s,\varrho)-\kappa D(s_1,\varrho))
+ \frac{1-\kappa}{2}\Lo
\\+\left( D(s,1) - \kappa D(s_1,1)\right)  
 + \Re\left( \frac{\gamma'_K}{\gamma_K}(s) -\kappa \frac{\gamma'_K}{\gamma_K}(s_{1}) \right).
\end{multline}
Stechkin (\cite{St}, Lemma 2) proved that if $\Re(s) > 1$ and $\frac{1}{2} \le \Re (z) \le 1$, then 
\[
  D(s,z) - \kappa D(s_1,z) \ge 0.
\]
Applying this positivity result, we discard those $\varrho \notin \mathcal{R}_{\epsilon,t}$ to obtain
\begin{multline}
 \label{eq:diffid}
-\Re \left( \frac{\zeta_K'}{\zeta_K}(s)-\kappa \frac{\zeta_K'}{\zeta_K}(s_{1})\right)  \le   
-  \sum_{\varrho \in \mathcal{R}_{\epsilon,t}}
 (D(s,\varrho)-\kappa D(s_1,\varrho))
+ \frac{1-\kappa}2\Lo
\\
+\left( D(s,1) - \kappa D(s_1,1)\right)   + \Re\left( \frac{\gamma'_K}{\gamma_K}(s) -\kappa \frac{\gamma'_K}{\gamma_K}(s_{1}) \right).
\end{multline}
Observe that 
\[
-\big(  D(s,\varrho)-\kappa D(s_1,\varrho) \big)
 = -  
\Re \Big( \frac{1}{s-\varrho} \Big) 
  + 
\Re\Big(
\frac{\kappa}{s_1-\varrho} 
+ \frac{\kappa}{s_1-1 +\overline{\varrho}}  - \frac{1}{s-1+\overline{\varrho}} \Big).
\]
We shall prove that the conditions on $\sigma, t, \beta,$ and $\gamma$ imply that 
\[
\Re\Big(
\frac{\kappa}{s_1-\varrho} 
+ \frac{\kappa}{s_1-1 +\overline{\varrho}}  - \frac{1}{s-1+\overline{\varrho}} \Big)
\le  5 \epsilon.
\]
The expression we want to bound equals
\begin{align*}
  & \kappa \frac{\sigma_1-\beta}{(\sigma_1-\beta)^2+(t-\gamma)^2} + \kappa \frac{\sigma_1-1+\beta}{(\sigma_1-1+\beta)^2+(t-\gamma)^2}
    - \frac{\sigma-1+\beta}{(\sigma-1+\beta)^2+(t-\gamma)^2}  .
\end{align*}
Note that this is of the form $g(a,b,c;x)$, as defined in \eqref{eq:g}, where 
$a = \sigma_1-\beta$,  $b = \sigma_1-1+\beta$, $c=\sigma-1+\beta$, and $x=t-\gamma$.
The assumption  $1 < \sigma \le 1+\epsilon$ with $0<\epsilon \le 0.01$ implies that
 \[
   \frac{1+\sqrt{5}}{2} \le \sigma_1 \le \frac{1+\sqrt{5}}{2} +\epsilon .
\]
Recall that  $1-\epsilon \le \beta < 1$. 
Together these conditions imply that 
\[
  |a-a_0| \le 2 \epsilon, |b-b_0| \le 2 \epsilon, \text{ and } |c-c_0| \le 2\epsilon
\]
where $a_0, b_0,$ and $c_0$ are the constants defined in Lemma 5. 
By Lemma \ref{gbds} (ii), we have 
$g(a,b,c;x) \le 5 \epsilon$. 
Thus 
\begin{equation}
  \label{eq:sumDineq}
  -  \sum_{\varrho \in \mathcal{R}_{\epsilon,t}}
 (D(s,\varrho)-\kappa D(s_1,\varrho))
 \le   -\sum_{\varrho \in \mathcal{R}_{\epsilon,t}} \Re \Big(  \frac{1}{s-\rho} \Big)
 + 5 \epsilon |\mathcal{R}_{\epsilon,t}|. 
\end{equation}
Next, we observe that
\[
 D(s,1) - \kappa D(s_1,1)
 = \Re\left(\frac{1}{s-1} \right) - g(a,b,c;t) 
\]
where $a = \sigma_1-1, b=\sigma_1, \text{ and } c = \sigma$.
Since $1 < \sigma \le 1+\epsilon$, it follows that $|a-a_0| \le \epsilon$, $|b-b_0| \le \epsilon$, and $|c-c_0| < \epsilon$.
Therefore by Lemma \ref{gbds} (ii),  $-g(a,b,c;t) \le  0.121586 +5 \epsilon$, and then
\begin{equation}
  \label{eq:Ds1ineq}
 D(s,1) - \kappa D(s_1,1)  \le  \Re\Big(\frac{1}{s-1}  \Big)
+ 0.121586 +5 \epsilon.
\end{equation}
Finally the gamma factors are dealt with.
By the duplication formula 
$\Gamma(s)=	\frac{2^{s-1}}{\sqrt{\pi}}\Gamma(\frac{s}{2})\Gamma(\frac{s+1}{2})$, 
it follows from \eqref{eq:gamK} that
$
\gamma_K(s) =  (2\sqrt{\pi})^{-r_2} \pi^{-n_K s/2}  \Gamma\left(\tfrac{s}{2}\right)^{r_1+r_2}  \Gamma\left(\tfrac{s+1}{2}\right)^{r_2}.
$
Therefore
\begin{multline*}
   \Re   \left( \frac{\gamma'_K}{\gamma_K}(s) -\kappa \frac{\gamma'_K}{\gamma_K}(s_{1}) \right) 
   = -n_K (1-\kappa) \frac{\log \pi}{2}   \\
    + \frac{r_1+r_2}{2} \Re 
   \Big(
   \frac{\Gamma'}{\Gamma} \Big(\frac{s}{2} \Big) -  \kappa \frac{\Gamma'}{\Gamma} \Big(\frac{s_1}{2} \Big)
   \Big)
   + \frac{r_2}{2} \Re 
   \Big(
    \frac{\Gamma'}{\Gamma} \Big(\frac{s+1}{2}\Big) - \kappa \frac{\Gamma'}{\Gamma} \Big(\frac{s_1+1}{2} \Big)
   \Big).
\end{multline*}
By \eqref{eq:fsigmatbound},  the last two terms in the previous equation are bounded by 
\begin{align*}
 (r_1+r_2)C_0(0.01) + r_2C_1(0.01) 
 & \le n_K \max\left(C_0(0.01) , 0.5  (C_0(0.01)+C_1(0.01)) \right)  \\
& < -0.228844 n_K 
\end{align*}
for $1 < \Re(s) \le 1.01$ and $|\Im(s)| \le 1$. 
Thus
\begin{equation}
   \label{eq:gamKdiff}
   \Re   \left( \frac{\gamma'_K}{\gamma_K}(s) -\kappa \frac{\gamma'_K}{\gamma_K}(s_{1}) \right) 
   \le  n_K (  
   -(1-\kappa) \frac{\log \pi}{2} - 0.228844 )
<-0.545240n_K.
\end{equation}
We also need
\begin{equation}
  \label{eq:zetaKs1}
  - \kappa \Re \frac{\zeta_{K}'}{\zeta_K}(s_1)
\le \kappa \Big| \frac{\zeta_{K}'}{\zeta_K}(s_1) \Big|
  \le  - \kappa n_K  \frac{\zeta'}{\zeta}(\sigma_1)
  \le   -   \frac{n_K}{\sqrt{5}}  \frac{\zeta'}{\zeta} \Big(\frac{1+\sqrt{5}}{2} \Big)
 \le 0.509786 n_K.
\end{equation}
By \eqref{eq:diffid} combined with \eqref{eq:sumDineq}, \eqref{eq:Ds1ineq}, \eqref{eq:gamKdiff}, and \eqref{eq:zetaKs1}, 
we arrive at
\begin{multline*}
-\Re \left( \frac{\zeta_K'}{\zeta_K}(s) \right) \le   
\Re\Big( \frac{1}{s-1}\Big) 
-  \sum_{\varrho \in \mathcal{R}_{\epsilon,t}}  \Re \Big( \frac{1}{s -\varrho} \Big)
+ \frac{1-\kappa}2\Lo
 -0.035454 n_K \\ + 5 \epsilon ( |\mathcal{R}_{\epsilon,t}| +1)+ 0.121586. 
\end{multline*}
\end{proof}

\section{An explicit formula}

Classical theorems concerning zero-free regions deal with the logarithmic derivative 
\begin{equation}
\label{eq-log-der-zeta}
   - \frac{\zeta_{K}'}{\zeta_K}(s) =  \sum_{\mathfrak{a} \subset  \mathcal{O}_K  }
    \frac{\Lambda(\mathfrak{a})}{(N\mathfrak{a})^s}
\end{equation}
where $\mathfrak{a}$ ranges through non-zero ideals of $\mathcal{O}_K$ and 
\begin{equation*}
   \label{eq:lambda}
   \Lambda(\mathfrak{a}) =  \begin{cases}
   \log(N \mathfrak{p})     & \text{ if } \mathfrak{a}=\mathfrak{p}^m \text{ where } \mathfrak{p} \text{ is prime}, \\
    0 & \text{ else}. \\
    \end{cases}
\end{equation*}
The best explicit zero-free region theorems  as in \cite{HB} deal with smoothed versions of the logarithmic derivative. 
Throughout this section we set $s=\sigma+it$, $|t| \le 1$,  and 
\begin{equation} 
   \label{eq:Gs}
   \mathcal{K}(s) = 
   \Re \Big(  \sum_{\mathfrak{a} \subset  \mathcal{O}_K  }
    \frac{\Lambda(\mathfrak{a})}{(N\mathfrak{a})^s} f(\Lo^{-1} \log (N \mathfrak{a})) \Big)
\end{equation}
where $\mathfrak{a}$ ranges through non-zero ideals and $f$ is a real-valued function.  In order to derive nice properties of $\mathcal{K}(s)$,
conditions will be imposed on $f$. In this section, we follow very closely the work of Heath-Brown \cite[pp.\ 280-283]{HB}.

\noindent {\it Condition 1}. Let $x_0$ be a positive constant. Let $f$ be  a continuous function from $[0,\infty)$ to 
$\mathbb{R}$ which is supported in $[0,x_0)$ and satisfies $f(0) \ge 0$. 
In addition, $f$ is twice differentiable on $(0,x_0)$, $f''$ is continuous, 
and there exists a positive constant $B=B(f)$ with 
$|f''(t)| \le B(f)$  for  $t \in (0,x_0)$.  

\noindent Associated to $f$ is its Laplace transform
\[
    F(z) = \int_{0}^{\infty} e^{-zt} f(t) \, dt. 
\]
As $f$ is of compact support, $F$ is entire.
We now consider how $F$ decays as $|z| \to \infty$. 
Note that for $\Re(z) >0$, an integration by parts yields
\begin{equation}
   \label{eq:Fasym}
   F(z) = \frac{f(0)}{z} + F_0(z) 
\end{equation}
where 
\begin{equation}
  \label{eq:Fodef}
  F_0(z) = \frac{1}{z} \int_{0}^{x_0} e^{-zt} f'(t) dt. 
\end{equation}
By analytic continuation, \eqref{eq:Fasym} holds for all $z\in \C$. 
Our next step is to determine how $F_0$ decays as $|z| \to \infty$.  
This will require some bounds for $f$ and $f'$. 
By the mean value theorem, there exists $t_0 \in (0,x_0)$ such that 
\[
   |f'(t_0)| = |(f(x_0)-f(0)) x_{0}^{-1}| = |f(0)| x_{0}^{-1}.
\] 
Another application of the mean value theorem yields
\begin{equation}
 \label{eq:cond2}
  |f'(t)| \le |f'(t)-f'(t_0)| + |f'(t_0)| 
   \le B(f)x_0 + |f(0)|x_0^{-1}
\end{equation}
for all $t \in (0,x_0)$.  Likewise, 
\begin{equation}
\begin{split} \label{eq:cond3}
  |f(t)|=|f(t)-f(x_0)| 
  \le |t-x_0| (
  B(f)x_0 + |f(0)|x_{0}^{-1}
  )
  \le B(f)x_{0}^2 + |f(0)|
\end{split}
\end{equation}
for all $t \in (0,x_0)$. 
Integrating \eqref{eq:Fodef} by parts again, it follows that 
\[
  F_0(z) = z^{-2}\big( f'(0^+)-f'(x_0^{-}) e^{-zx_0}\big) + z^{-2} \int_{0}^{x_0} e^{-zt} f''(t)dt
\]
and thus 
\begin{equation}
   \label{eq:F0bd}
   |F_0(z)| \le c(f)|z|^{-2} 
\end{equation}
where 
\begin{equation}
  \label{eq:cf}
  c(f)   =  3B(f)x_0+2|f(0)|x_{0}^{-1}. 
\end{equation}
With this bound for $F_0(z)$, we are prepared to derive an explicit formula and an inequality relating $\mathcal{K}(s)$ to the zeros of $\zeta_K(s)$. 

Let $c > 1$ and suppose that $\sigma > c$.  Consider the contour integral
\begin{align*}
   I & = \frac{1}{2 \pi i} \int_{(c)} \Big(
     - \frac{\zeta_{K}'}{\zeta_K}(w)  \Big) F_0 ((s-w) \Lo) \, dw
\end{align*}
where $\int_{(c)} =  \lim_{T \to \infty} \int_{c-i T}^{c+i T}$. 
Expanding out the Dirichlet series using \eqref{eq-log-der-zeta}, we have
\begin{equation}\label{eq-defI}
I  =  \sum_{\mathfrak{a} \subset  \mathcal{O}_K }
   \Lambda(\mathfrak{a})
      \frac{1}{2 \pi i} \int_{(c)} (N \mathfrak{a})^{-w}
      F_0 ((s-w) \Lo) dw. 
\end{equation}
The Laplace inversion formula is
\[
f(\Lo^{-1} \log x ) =  \frac{1}{2 \pi i} \int_{((\sigma-c) \Lo)} e^{z \tfrac{\log x}{\Lo}} F (z) dz.
\]
We may assume that $x = N \mathfrak{a} > 1$ since $\Lambda(\mathfrak{a})$ is supported on prime powers.  
By the variable change $z=(s-w) \Lo$ and \eqref{eq:Fasym}, we have 
\begin{equation}
\begin{split}
\label{eq-inverseLaplace}
f(\Lo^{-1} \log x) &= \Lo \frac{1}{2 \pi i} \int_{(c)} x^{s-w } F ((s-w) \Lo) dw
\\&= \frac{f(0)}{2 \pi i} \int_{(c)} x^{s-w } \frac{dw}{s-w }  +\frac{\Lo}{2 \pi i} \int_{(c)} x^{s-w } F_0 ((s-w) \Lo) dw.
\end{split}
\end{equation}
It follows from Perron's formula that $\frac{1}{2 \pi i} \int_{(c)} x^{s-w} \frac{dw}{s-w} =
\frac{1}{2 \pi i} \int_{(\sigma-c)} x^z \frac{dz}{z} =1$ since $x>1$ and $\sigma-c>0$. 
Thus \eqref{eq-inverseLaplace} gives
\[
     \frac{1}{2 \pi i} \int_{(c)} x^{-w} F_0 ((s-w) \Lo) dw
     = \Lo^{-1} x^{-s} (f(\Lo^{-1} \log x)-f(0))
\] 
and, together with \eqref{eq-defI}, we obtain
\begin{equation}
  I = \Lo^{-1} \sum_{\mathfrak{a} \subset  \mathcal{O}_K  }
    \frac{\Lambda(\mathfrak{a})}{(N\mathfrak{a})^s} f(\Lo^{-1} \log N \mathfrak{a})  + \Lo^{-1} f(0) \frac{\zeta_{K}'}{\zeta_K}(s) .
  \label{eq:I}
\end{equation}
Next, $I$ is evaluated in a different way.  The line of integration is moved left to
$\Re(w)=- \frac{1}{2}$. The poles of $-\frac{\zeta_{K}'}{\zeta_K}(w) $ are located at $w=1$, with residue $1$, 
at the non-trivial zeros $\varrho$ of $\zeta_K$, with residue $-1$, 
and at $w=0$, since $\zeta_K$ has a trivial zero there, with residue $-(r_1+r_2-1)$.
This yields
\begin{multline}\label{eq:defI}
   I = \frac{1}{2 \pi i} \int_{(-1/2)} \Big( 
     - \frac{\zeta_{K}'}{\zeta_K}(w) 
   \Big) F_0((s-w) \Lo) dw +
  F_0 ((s-1) \Lo) \\ - \sum_{\varrho} F_0 ((s-\varrho) \Lo) - (r_1+r_2-1)F_0(s\Lo).
\end{multline}
By \eqref{eq:F0bd}, 
$\ds{
|F_0(s\Lo) | \le c(f) (|s|\Lo)^{-2} \ll c(f) \Lo^{-2} 
}$.
Thus
\begin{equation}\label{eq:pole0bd}
- (r_1+r_2-1)F_0(s\Lo) \ll c(f) n_K \Lo^{-2} .
\end{equation}
By \eqref{def-xi} and  the functional equation \eqref{eq:xife}, 
\[
   \frac{\zeta_{K}'}{\zeta_K}(w) =
-\frac{\zeta_{K}'}{\zeta_K}(1-w)   - \Lo
   - \frac{\gamma_{K}'}{\gamma_{K}}(w)
   - \frac{\gamma_{K}'}{\gamma_{K}}(1-w).
\]
We have  $|\frac{\zeta_{K}'}{\zeta_K}(1-w)| 
\le - n_K \frac{\zeta'}{\zeta}(\frac{3}{2})$
and, by \eqref{eq:gamK} and Stirling's formula,
\[
   \frac{\gamma_{K}'}{\gamma_K}(z) \ll n_K \log(|z|+2)
\]
for $\Re z = -\frac{1}{2}$ or $\frac{3}{2}$. Therefore 
\[
  \frac{\zeta_{K}'}{\zeta_K}(w)= - \Lo + \mathcal{O}(n_K \log(|w|+2)).
\]
Together with \eqref{eq:F0bd}, we deduce that 
\begin{multline*}
\frac{1}{2 \pi i} \int_{(-1/2)} \Big( - \frac{\zeta_{K}'}{\zeta_K}(w) \Big) F_0((s-w) \Lo) dw 
\\ =  \frac{\Lo}{2 \pi i} \int_{(-1/2)} F_0((s-w) \Lo) dw
+ \mathcal{O}\left( \frac{c(f) n_K}{\Lo^2} 
    \int_{(-1/2)} \frac{\log(|w|+2)}{|s-w|^2} |dw|
   \right).
\end{multline*}
By moving the contour far to the left, it follows from $F_0(z)$ being analytic for $\Re (z)>0$, that 
the first integral is zero. 
The second integral is bounded by $\ds{\mathcal{O}(\log (|s|+2))}$.
By \eqref{eq:defI}, \eqref{eq:pole0bd}, and the fact that $n_K \ll \Lo$, it follows that
\[
   I =  F_0 ((s-1) \Lo) - \sum_{\varrho} F_0((s-\varrho) \Lo) 
   + \mathcal{O}(c(f)  \Lo^{-1} \log (|s|+2))
\]
and thus, with \eqref{eq:I},
\begin{multline}
\label{eq:I-bis}
   \sum_{\mathfrak{a} \subset  \mathcal{O}_K }
\frac{\Lambda(\mathfrak{a})
 }{(N\mathfrak{a})^s} f(\Lo^{-1} \log N \mathfrak{a})
= \Lo F_0 ((s-1) \Lo) 
- \Lo \sum_{\varrho} F_0((s-\varrho)  \Lo) 
  -  f(0)  \frac{\zeta_{K}'}{\zeta_K}(s) 
\\   + \mathcal{O}(c(f) \log (|s|+2)),
\end{multline}
for all $\sigma > c$.

Let $\epsilon>0$ and $|t|\le 1$.
Consider the set $\mathcal{R}_{\epsilon,t}$ defined by \eqref{eq:defRepsilont}.
Those zeros which satisfy $\varrho \notin \mathcal{R}_{\epsilon,t}$
may be discarded with an error
\[
 \ll \Lo \sum_{\varrho \notin \mathcal{R}_{\epsilon,t} } \frac{c(f)}{\Lo^2|s-\varrho|^2}
 \ll_{\epsilon} \Lo^{-1} c(f) \sum_{\varrho} \frac{1}{1+|t-\gamma|^2}.
\]
Assume that $\gamma$ satisfies $k \le |t - \gamma| \le k+1$ where $k \ge 1$. 
Observe that since $|t|\le 1$, then 
\[
k-1 \le  |t-\gamma| - |t| 
  \le  |\gamma| \le |t-\gamma| + |t| \le k+2.
\]
It follows from Theorem \ref{nt} that
$\ds{
\sum_{ k\le |t-\gamma| \le k+1}  1 \le \sum_{ k -1 \le |\gamma| \le k+2}  1 \ll \Lo \log(k+2) ,
}$
and in addition, we have 
$\ds{
\sum_{  |t-\gamma| \le 1}  1 \ll \Lo 
}$.
Employing these bounds, we obtain
\[
   \sum_{\varrho}  \frac{1}{1+|t-\gamma|^2} 
 \le   \sum_{k=0}^{\infty}   \frac{1}{1+k^2}  \sum_{ k\le |t-\gamma| \le k+1} 1
  \ll \Lo \sum_{k=0}^{\infty} \frac{\log(k+2)}{1+k^2}  \ll \Lo ,
\]
giving that the discarded zeros are bounded by $\mathcal{O}(c(f) )$. 
Taking real parts in \eqref{eq:I-bis} and recalling \eqref{eq:Fasym}, we obtain
\begin{multline*}
 \mathcal{K}(s)
   = \Lo \Re F ((s-1) \Lo) - \Re \Big( \frac{f(0)}{s-1} \Big)  - \sum_{\varrho \in \mathcal{R}_{\epsilon,t}}
   \Big(
   \Lo  \Re( F((s-\varrho) \Lo))  
   - \Re \Big( \frac{f(0)}{s-\varrho}  \Big) 
   \Big) \\
   -f(0) \Re \Big(\frac{\zeta_{K}'}{\zeta_K}(s)  \Big)
  + \mathcal{O} ( c(f) ). 
 \end{multline*}
By Corollary \ref{gr2},
\begin{equation*}
    -\Re \left( \frac{\zeta_K'}{\zeta_K}(s) \right)   \le   
\Re\Big( \frac{1}{s-1}\Big) 
-  \sum_{\varrho \in \mathcal{R}_{\epsilon,t}}  \Re \Big( \frac{1}{s -\varrho} \Big)
+ (\phi+ C_2 \epsilon) \Lo
\end{equation*}
for $d_K$ sufficiently large. 
Since $f(0) \ge 1$,  the last two formulae combine to give 
\begin{equation}
  \label{eq:Kineq}
 \mathcal{K}(s)
 \le \Lo \Re F ((s-1) \Lo) 
- \Lo  \sum_{\varrho \in \mathcal{R}_{\epsilon,t}} \Re( F((s-\varrho) \Lo)) 
  + f(0)(\phi+ C_2 \epsilon) \Lo
  + \mathcal{O} ( c(f) )
\end{equation}
for $\sigma > c$ and $|t| \le 1$. 
We now extend the range of $s$ for which this formula is valid. 
Let $\mu$ and $\delta$ be real constants and assume that $0 \le \mu \le \frac{1-\delta}{x_0} \frac{\log \Lo}{\Lo}$
where $0<\delta < 0.5$. 
Now consider the Laplace transform  pair
\begin{align*}
  g(t) = e^{\mu t}f(t) \text{ and } 
  G(z) = \int_{0}^{\infty} e^{-zt} g(t) dt =
  F(z-\mu). 
\end{align*}
The parameter $\mu$ will allow us to move into the critical strip because of the decay properties of $g$ and its derivatives. 
Moreover, $g$ 
satisfies Condition 1 with the same $x_0$ as before.  However, 
by \eqref{eq:cond2} and \eqref{eq:cond3} 
\[
  B(g) \ll e^{\mu x_0} \Big(
   B(f) + 2 \mu (B(f)x_0+ |f(0)|x_{0}^{-1})
   + \mu^2 (B(f)x_{0}^2+ |f(0)|)  \Big).
\]
Consequently, by \eqref{eq:cf},
\[
  c(g) \ll \Lo^{1-\delta}(\log \Lo)^2 \text{ for } 0 \le \mu \le  \frac{1-\delta}{x_0} (\log \Lo)
\]   
where the implied constant depends on $x_0$ and $f$. 
By applying \eqref{eq:Kineq} to the Laplace transform pair $g$ and $G$  and
noting that  
\[
x^{-s} g(\Lo^{-1} \log x)= x^{-s+\mu/\Lo} f(\Lo^{-1}  \log x),
\]
we derive
\begin{multline*}
   \mathcal{K}(s-\mu/\Lo)
   \le \Lo \Re F((s-\mu/\Lo-1)\Lo)- \Lo\sum_{\varrho \in \mathcal{R}_{\epsilon,t}}
   \Re( F((s-\mu/\Lo-\varrho) \Lo)) 
 \\ + g(0)(\phi+ C_2 \epsilon) \Lo + O\left( c(g)  \right).
\end{multline*}
Replacing $s-\mu/\Lo$ by $s$ and noting that $g(0)=f(0)$, we obtain 
\begin{multline*}
   \mathcal{K}(s)
   \le \Lo \Re F((s-1)\Lo)- \Lo\sum_{\varrho \in \mathcal{R}_{\epsilon,t}}
   \Re( F((s-\varrho) \Lo)) 
 \\ + f(0)(\phi+ C_2 \epsilon) \Lo + O\left( \Lo^{1-\delta}(\log \Lo)^2  \right)
\end{multline*}
valid for $\sigma > c- \frac{(1-\delta)(\log \Lo)}{ x_0 \Lo}$.  Now choosing $c=1+\frac{\delta \log \Lo}{x_0 \Lo}$, 
and then replacing $2 \delta$ by $\delta$ we obtain the following. 
\begin{lemma} \label{KsE}
Let $0 < \epsilon \le 10^{-2}$ and $0 < \delta < 1$.  
Suppose $f$ satisfies Condition 1.
If  $\sigma > 1- \frac{(1-\delta)(\log \Lo)}{ x_0 \Lo}$ and $|t|\le 1$, then there exists $C_3 >0$ such that
\begin{align*}
 \mathcal{K}(s)
   & \le \Lo \Re F((s-1)\Lo)- \Lo\sum_{\varrho \in \mathcal{R}_{\epsilon,t}}
   \Re( F((s-\varrho) \Lo)) 
 + f(0)(\phi+ C_3 \epsilon) \Lo
\end{align*}
provided that $d_K$ is sufficiently large. 
\end{lemma}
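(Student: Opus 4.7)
The strategy is to first establish the bound for $\sigma > 1$ by a Mellin-type explicit formula, and then extend the range into the critical strip by a Laplace shift. For $\sigma > c > 1$, I would consider
\[
I = \frac{1}{2\pi i}\int_{(c)} \Big( -\frac{\zeta_K'}{\zeta_K}(w)\Big)\, F_0((s-w)\Lo)\, dw,
\]
where $F_0(z) := F(z) - f(0)/z$. Two integrations by parts, together with the uniform bounds on $f, f', f''$ built into Condition 1, give the quadratic decay $|F_0(z)| \ll c(f)|z|^{-2}$ that makes the integral absolutely convergent. Expanding $-\zeta_K'/\zeta_K(w)$ as a Dirichlet series, interchanging sum and integral, and then using Laplace inversion on $F = F_0 + f(0)/z$ (the $f(0)/z$ piece being handled by Perron's formula), one finds
\[
I = \Lo^{-1}\mathcal{K}(s) + \Lo^{-1}f(0)\frac{\zeta_K'}{\zeta_K}(s).
\]

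Next I would re-evaluate $I$ by shifting the contour to $\Re(w) = -1/2$, collecting residues at $w=1$, at each non-trivial zero $\varrho$, and at $w=0$ (the trivial zero of order $r_1+r_2-1$, contributing $\mathcal{O}(c(f) n_K \Lo^{-2})$, which is negligible since $n_K \ll \Lo$). The shifted integral is controlled by applying the functional equation to transfer $\zeta_K'/\zeta_K(w)$ from $\Re(w) = -1/2$ to $\Re(w) = 3/2$ and using Stirling for the resulting $\gamma_K'/\gamma_K$ terms; combined with the $|z|^{-2}$ decay of $F_0$, this error is $\mathcal{O}(c(f)\Lo^{-1}\log(|s|+2))$. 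Taking real parts and truncating the zero sum to $\mathcal{R}_{\epsilon,t}$ gives an inequality of the desired shape; the truncation error is controlled by bucketing zeros with $k \le |t-\gamma| \le k+1$ and invoking Theorem \ref{nt} to get $\sum_\varrho (1+|t-\gamma|^2)^{-1} \ll \Lo$, yielding an $\mathcal{O}(c(f))$ loss. Because $f(0) \ge 0$, I can then multiply the inequality of Corollary \ref{gr2} by $f(0)$ and substitute it to absorb $-f(0)\Re(\zeta_K'/\zeta_K)(s)$, establishing the lemma whenever $\sigma > c$ for any fixed $c > 1$.

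The remaining, and most delicate, step is to drop $\sigma$ below $1$. For this I would apply the bound just obtained not to $f$ itself but to the tilted function $g(t) = e^{\mu t} f(t)$, whose Laplace transform is $G(z) = F(z - \mu)$; this effectively translates $s$ to $s - \mu/\Lo$, at the cost that $B(g)$, and hence $c(g)$, grow exponentially in $\mu$. Differentiating $g$ twice and applying the mean-value estimates \eqref{eq:cond2}--\eqref{eq:cond3} gives $c(g) \ll \Lo^{1-\delta}(\log\Lo)^2$ provided $\mu \le (1-\delta)(\log\Lo)/x_0$, which is $o(\Lo)$ and so absorbable into the $\phi \Lo$ main term for $d_K$ large. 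Starting from $c = 1 + (\delta/2)(\log\Lo)/(x_0\Lo)$ and performing this Laplace shift extends the inequality to $\sigma > 1 - (1-\delta)(\log\Lo)/(x_0\Lo)$ after a harmless relabeling of $\delta$, completing the proof. The main obstacle is exactly this quantitative bookkeeping of $c(g)$: one has to choose $\mu$ large enough to reach below the line $\Re(s) = 1$ but small enough that the exponential blow-up of $c(g)$ stays below $\Lo$.
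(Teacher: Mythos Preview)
Your proposal is correct and follows essentially the same route as the paper: the contour integral of $(-\zeta_K'/\zeta_K)(w)F_0((s-w)\Lo)$ evaluated two ways, the shift to $\Re(w)=-1/2$ using the functional equation and Stirling, truncation of the zero sum via Theorem~\ref{nt}, insertion of Corollary~\ref{gr2} (using $f(0)\ge 0$), and finally the Laplace tilt $g(t)=e^{\mu t}f(t)$ with the $c(g)\ll \Lo^{1-\delta}(\log\Lo)^2$ bookkeeping to push $\sigma$ below $1$. Your choice of $c$ and relabeling of $\delta$ is the same as the paper's up to an immaterial change of variable.
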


\section{The  Deuring-Heilbronn Phenomenon}
Let $0< \epsilon < 0.01$, let $0 < \delta <1 $, and let $f$ be a function satisfying Conditions 1 and 2. 
Associated to $f$ is the rectangle $\mathcal{R}$  defined by
\begin{equation}
   \label{eq:R}
   \mathcal{R}= \Big\{  s \in \mathbb{C} \ \Big| \ 1- \frac{(1-\delta)(\log \Lo)}{x_0 \Lo} \le \sigma \le 1, 
        |t| \le 1 \Big\}.
\end{equation}
Suppose that $\varrho_1=\beta_1$ is a real zero in $\mathcal{R}$. Moreover, suppose that 
$\beta_1$ is the maximum real part of all zeros of $\zeta_K(s)$ in $\mathcal{R}$. We set 
\[
  \beta_1=1- \lambda_1 \Lo^{-1}. 
\]
Let $\varrho'=\beta'+i \gamma'$ denote another zero of $\zeta_K(s)$ in $\mathcal{R}$
such that $\Re (\varrho')$ is maximal.
We shall write 
\[
   \beta' = 1- \lambda' \Lo^{-1} \text{ and }
   \gamma' = \mu' \Lo^{-1}. 
\]
The goal of this section is to derive inequalities which relate $F$, $\beta_1$, and $\beta'$.
From these inequalities we derive our repulsion theorems which show that $\beta'$ is far from
1 if $\beta_1$ is very close to 1. Many proofs of repulsion theorems involving real zeros of an $L$-function
employ the inequality $1 +\cos(x) \ge 0$. We use this in the form
\[
 1+ \Re( (N \mathfrak{a})^{-i \gamma'}) \ge 0
\]
where  $\mathfrak{a}$ is a nonzero ideal of $K$.   
This implies that 
\begin{equation}
   \mathcal{K}(\beta') + \mathcal{K}(\beta'+i \gamma') \ge 0
   \label{eq:Kineq2}
\end{equation}
where $\mathcal{K}(s)$ is given by \eqref{eq:Gs}. This is
the starting point of our argument.  
In order to obtain useful information from this inequality we need to impose further conditions on $f$ and $F$. 

\noindent {\it Condition 2}. The function $f$ is non-negative and 
\begin{equation}
  \label{eq:Fpos}
  \Re(F(z)) \ge 0 \text{ for } \Re(z) \ge 0. 
\end{equation}
Before proceeding with our proof of  Theorem \ref{dh}, we will describe briefly the argument employed
in \cite{LMO}.
In fact, the authors consider the function 
\[
    \mathcal{H}_{j}(s) = \Re \Big(  \frac{d^j}{ds^j}  \Big( -\frac{\zeta_{K}'}{\zeta_K}(s) \Big) \Big)
\]
where $j \in \mathbb{N}$. Their initial observation is that a variant of Turan's second main theorem implies that 
\[
   \mathcal{H}_{2j_0-1}(2) + \mathcal{H}_{2j_0-1}(2+i \gamma') \ge 0
\]
for some $j_0 \in \mathbb{N}$. By differentiating the global explicit formula for $-(\zeta_{K}'/\zeta_K)(s)$, the left hand side 
of this inequality may be related to a sum over zeros of $\zeta_K(s)$.  
In contrast, in our argument we shall make use of the positivity condition \eqref{eq:Fpos}
to obtain a lower bound for the relevant sum over zeros.  Numerically the method of \cite{LMO} produces a smaller constant 
for the Deuring-Heilbronn phenomenon, though it is valid for a larger range of $\beta'$.  

We are now prepared to derive from \eqref{eq:Kineq2}  an inequality relating $\lambda_1$, $\lambda'$, $f$, and $F$. 
Since $\beta' > 1- \frac{1-\delta}{x_0}(\log \Lo) \Lo^{-1}$ we may apply Lemma \ref{KsE}:
\begin{align*}
  \mathcal{K}(\beta') 
   & \le \Lo F((\beta'-1)\Lo)- \Lo \sum_{\varrho \in \mathcal{R}_{\epsilon,0}}
   \Re F((\beta'-\varrho) \Lo) 
 + f(0)(\phi+ C_3 \epsilon) \Lo.
\end{align*}
By the choice of $\varrho'$, all terms $\varrho \ne \varrho_1$ produce non-negative 
contributions and may be dropped. Therefore
\[
  \mathcal{K}(\beta') \le \Lo F(-\lambda')- \Lo F(\lambda_1-\lambda')
  + f(0)(\phi+ C_3 \epsilon) \Lo.
\]
Also by Lemma \ref{KsE}
\begin{align*}
   \mathcal{K}(\beta'+i \gamma') 
    & \le \Lo \Re F((\beta'+i \gamma'-1)\Lo) \\ 
    & - \Lo\sum_{\varrho \in \mathcal{R}_{\epsilon, \gamma'}} 
   \Re F((\beta'+i \gamma'-\varrho) \Lo) 
 + f(0)(\phi+ C_3 \epsilon) \Lo.
\end{align*}
For $d_K$ large enough, we have $1-\epsilon < 1-\frac{1-\delta}{x_0} (\log \Lo) \Lo^{-1} <\beta'$ and thus $\varrho' \in \mathcal{R}_{\epsilon,\gamma'}$.
In the sum over zeros, we have a contribution from $\varrho=\varrho'$ which contributes a term $F(0)$. 
Note that for all $ \gamma'$ satisfying $|\gamma'| \le 1$, $\varrho_1$ occurs in the sum and makes a 
contribution $\Re F((\varrho'-\varrho_1)\Lo)$.  
Putting this together yields 
\[
   \sum_{\varrho \in \mathcal{R}_{\epsilon,\gamma'}}
   \Re F((\beta'+i \gamma'-\varrho) \Lo) 
   \ge F(0) + \Re F((\varrho'-\varrho_1)\Lo).
\]
We deduce that 
\begin{align*}
  \mathcal{K}(\beta'+i \gamma') 
    & \le \Lo  \left( 
    \Re F(-\lambda'+i \mu')
    -F(0) 
    - \Re F(\lambda_1-\lambda'+i\mu')  
    + f(0)(\phi+ C_3 \epsilon)
    \right).
\end{align*}
It follows from (\ref{eq:Kineq2}) that 
\begin{equation*}
  F(-\lambda') - F(\lambda_1-\lambda') + \Re  F(-\lambda'+i \mu')  - F(0)
  -  \Re  F(\lambda_1-\lambda'+i \mu')
   + f(0) (2 \phi + 2C_3 \epsilon) \ge 0.
\end{equation*}
However, we observe that 
\begin{multline*}
  \Re ( F(-\lambda'+i \mu')
  - F(\lambda_1-\lambda'+i \mu'))
  =\int_{0}^{\infty} f(t) e^{\lambda't} (1- e^{-\lambda_1 t}) 
  \cos (\mu' t) \, dt  
 \\  \le \int_{0}^{x_0} f(t) e^{\lambda' t} (1-e^{-\lambda_1 t}) dt 
   = F(-\lambda')-F(\lambda_1-\lambda').
\end{multline*}
Combining the last two inequalities, we deduce 
\begin{lemma} \label{dh1}
Let $0<\epsilon <0.01$, $0 < \delta < 1$, and $f$ satisfy Conditions 1 and 2. 
Let $\beta_1$ be an exceptional zero of $\zeta_K(s)$ in $\mathcal{R}$ defined by \eqref{eq:R} and $\varrho'=\beta'+i \gamma'$ be another zero of $\zeta_K(s)$ in $\mathcal{R}$ with $\beta'$ maximal.
Then we have 
\begin{equation}
    \label{eq:Fin}
    2 F(-\lambda')-2 F(\lambda_1-\lambda')-F(0) + f(0) (2 \phi + 2C_3\epsilon)
    \ge 0
\end{equation}
provided that $d_K$ is sufficiently large. 
\end{lemma}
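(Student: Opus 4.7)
The plan is to take \eqref{eq:Kineq2}, namely $\mathcal{K}(\beta')+\mathcal{K}(\beta'+i\gamma')\ge 0$, as the starting point and convert each term into a usable bound via the smooth explicit formula of Lemma \ref{KsE}. This requires two discards in the sums over zeros: the first removes all zeros whose contribution to $\Re F$ is non-negative (allowed by Condition 2), and the second collapses the resulting complex-frequency terms into real-frequency ones by an elementary cosine bound.

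First I would apply Lemma \ref{KsE} at $s=\beta'$. The relevant sum is $\sum_{\varrho\in\mathcal{R}_{\epsilon,0}}\Re F((\beta'-\varrho)\Lo)$. Since $\beta'$ is maximal among the non-exceptional zeros in $\mathcal{R}$, every $\varrho\neq\varrho_1$ in this sum satisfies $\Re((\beta'-\varrho)\Lo)\ge 0$, so by Condition 2 these contributions are non-negative and may be discarded. The sole surviving zero term is that of $\varrho_1=\beta_1$, which contributes $-\Lo F(\lambda_1-\lambda')$, giving
\[
\mathcal{K}(\beta')\le \Lo\bigl(F(-\lambda')-F(\lambda_1-\lambda')\bigr)+f(0)(\phi+C_3\epsilon)\Lo.
\]
Similarly, applying Lemma \ref{KsE} at $s=\beta'+i\gamma'$ and retaining only the contributions of $\varrho'$ itself (producing $F(0)$) and of $\varrho_1$ (producing $\Re F((\varrho'-\varrho_1)\Lo)$), I would obtain
\[
\mathcal{K}(\beta'+i\gamma')\le \Lo\bigl(\Re F(-\lambda'+i\mu')-F(0)-\Re F(\lambda_1-\lambda'+i\mu')\bigr)+f(0)(\phi+C_3\epsilon)\Lo.
\]
Adding these two bounds and invoking \eqref{eq:Kineq2} yields a preliminary inequality containing the unwanted terms $\Re F(-\lambda'+i\mu')$ and $\Re F(\lambda_1-\lambda'+i\mu')$.

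The final step is a pointwise comparison based on $\cos(\mu't)\le 1$. Writing out
\[
\Re F(-\lambda'+i\mu')-\Re F(\lambda_1-\lambda'+i\mu')=\int_0^{x_0} f(t)e^{\lambda' t}(1-e^{-\lambda_1 t})\cos(\mu' t)\,dt,
\]
and using $f\ge 0$ together with $1-e^{-\lambda_1 t}\ge 0$, this integral is bounded above by $F(-\lambda')-F(\lambda_1-\lambda')$, so the complex-frequency contributions collapse into real-frequency ones with a factor of two. Rearranging produces the claimed inequality. The main obstacle is verifying that the discards in both zero-sums go in the correct direction: this depends crucially on the maximality of $\beta'$ among non-exceptional zeros and on the positivity supplied by Condition 2. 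Once these are in place, the remaining manipulation is mechanical.
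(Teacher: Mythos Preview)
Your proposal is correct and follows essentially the same route as the paper: apply Lemma \ref{KsE} at $s=\beta'$ and at $s=\beta'+i\gamma'$, discard all zero contributions except those of $\varrho_1$ (and, at the second point, $\varrho'$) using Condition 2 and the maximality of $\beta'$, add the two bounds, and then collapse the oscillatory terms via the pointwise inequality $\cos(\mu't)\le 1$ applied to the non-negative integrand $f(t)e^{\lambda' t}(1-e^{-\lambda_1 t})$. The only detail worth making explicit is that $\varrho_1\in\mathcal{R}_{\epsilon,\gamma'}$ because $|\gamma'|\le 1$, and that any zero in $\mathcal{R}_{\epsilon,0}$ or $\mathcal{R}_{\epsilon,\gamma'}$ lying outside $\mathcal{R}$ still has real part at most $\beta'$ (since such a zero has $\beta<1-\tfrac{(1-\delta)\log\Lo}{x_0\Lo}\le\beta'$), so the discard is legitimate for those zeros as well.
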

With this inequality in hand we may now prove an explicit version of the Deuring-Heilbronn phenomenon. 
We shall derive lower bounds for $\lambda'$ in terms of $\lambda_1$ by choosing specific test functions
$f$ to use in \eqref{eq:Fin}.  The choice of $f$ will depend on the size of $\lambda_1$. 
There are two ranges to consider: $\lambda_1$ very small and $\lambda_1$ of medium size.
Note that when we apply Lemma \ref{dh1} below we shall replace $2C_3 \epsilon$ by $\epsilon$. 

\subsection{Case 1: $\lambda_1$ very small}   First we choose one of the simplest functions satisfying Conditions 1 and 2. 
Let 
\[
  f(t) =  \begin{cases}
     x_0-t
     & \text{ for } 0 \le t \le x_0, \\
    0 & \text{ for } t > x_0. \\
    \end{cases}
\]
For this choice we have $f(0)=x_0$ and  $F(0)= \int_{0}^{x_0} (x_0-t)dt = \frac{1}{2} x_0^2$. 
Also, since $\lambda_1$ is very small, we use the simple inequality
\[
  F(-\lambda')-F(\lambda_1 -\lambda')
  = \int_{0}^{\infty} f(t) e^{\lambda' t} (1-e^{-\lambda_1 t}) dt
  \le \lambda_1 \int_{0}^{\infty} t f(t) e^{\lambda' t} dt. 
\]
However
\begin{align*}
  \int_{0}^{x_0} t(x_0-t)e^{\lambda' t} dt 
 = \lambda'^{-3}(x_0 \lambda' e^{x_0 \lambda'}-2e^{x_0 \lambda'}
  +x_0 \lambda' +2)  
  \le \lambda'^{-3}(x_0 \lambda' e^{x_0 \lambda'})
\end{align*}
and thus
\[
   2F(-\lambda')-2F(\lambda_1-\lambda') \le 2x_0 \lambda_1 \lambda'^{-2} e^{x_0 \lambda'}.
\]
Therefore, by Lemma \ref{dh1}, we derive 
\[
    2x_0 \lambda_1 \lambda'^{-2} e^{x_0 \lambda'}-\frac{1}{2}x_0^2 + x_0
  (2 \phi + \epsilon) \ge 0. 
\]
We choose $x_0 = 4 \phi + \lambda'^{-1} +2 \epsilon$ to obtain
\[
   2 \lambda_1 \lambda'^{-2} e^{x_0 \lambda'} - \frac{1}{2} \lambda'^{-1} \ge 0.
\]
Rearranging, it follows that
\[
   \lambda_1 \ge \frac{\lambda'}{4e} \exp(-\lambda'(4 \phi+2 \epsilon))
  \ge \exp(-\lambda'(4 \phi+2 \epsilon))
\]
for $ \lambda' \ge 4e$ and $d_K$ sufficiently large.  Further, solving for $\lambda'$ leads to 
\[
    \lambda' \ge  \Big( \frac{1}{4 \phi +2 \epsilon}  \Big) \log ( \lambda_1^{-1} ).
\]
Let $0<\epsilon \le 10^{-6}$.
By the zero-free region bound \eqref{eq:kadiri} with $R=12.74$, $\lambda'^{-1} \le 12.74$ and thus 
$x_0 \le 4 \phi +R^{-1}+2\epsilon = 13.8456$.  Hence, if 
$\beta' > 1-  \frac{\log \Lo}{13.85\Lo}$, then $\beta' \in \mathcal{R}$ for this choice of $f$
and  $\delta = 10^{-6}$. 
We obtain the following lemma:
\begin{lemma} \label{dhsmall}
Let $\beta_1$ be an exceptional zero of $\zeta_K(s)$  and $\varrho'=\beta'+i \gamma'$ be another zero of $\zeta_K(s)$ 
with  $\lambda' <  \frac{1}{13.85} \log \Lo$ and $|\gamma'| \le 1$. 
Then either $\lambda' < 4e$ or
\[
    \lambda' \ge  0.9045 \log( \lambda_1^{-1})
\]
for $d_K$ sufficiently large.
\end{lemma}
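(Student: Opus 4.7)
The plan is to apply Lemma \ref{dh1} with a carefully chosen test function $f$ satisfying Conditions 1 and 2, and then manipulate the resulting inequality into the stated lower bound for $\lambda'$. Since $\lambda_1$ is assumed to be very small, one wants to exploit the vanishing of $1-e^{-\lambda_1 t}$ as $\lambda_1 \to 0$, which calls for a test function whose Laplace transform is easy to analyze. A natural choice is the triangular function
\[
f(t) = \max(x_0 - t,\, 0),
\]
with $x_0$ left as a free parameter to be optimized at the end. This $f$ is nonnegative, continuous, piecewise linear with $f''\equiv 0$ on $(0,x_0)$, and supported in $[0,x_0]$; moreover $f(0) = x_0$ and $F(0) = x_0^2/2$. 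Both Conditions 1 and 2 are immediate.

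Next I would convert \eqref{eq:Fin} into a usable inequality relating $\lambda_1$, $\lambda'$, and $x_0$. Using the elementary bound $1-e^{-\lambda_1 t} \le \lambda_1 t$ one obtains
\[
F(-\lambda') - F(\lambda_1-\lambda') = \int_0^{x_0} f(t)\, e^{\lambda' t}(1-e^{-\lambda_1 t})\, dt \le \lambda_1 \int_0^{x_0} t(x_0-t)\, e^{\lambda' t}\, dt,
\]
and an explicit evaluation of the right-hand integral (discarding the lower-order polynomial terms) yields a clean bound of the form $x_0 \lambda_1 \lambda'^{-2} e^{x_0 \lambda'}$. Plugging this into \eqref{eq:Fin}, with $2C_3\epsilon$ renamed to $\epsilon$ as announced just before the lemma statement, produces
\[
2 x_0 \lambda_1 \lambda'^{-2} e^{x_0 \lambda'} - \tfrac12 x_0^2 + x_0(2\phi + \epsilon) \ge 0.
\]

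The optimization step is to choose $x_0 = 4\phi + \lambda'^{-1} + 2\epsilon$, which makes the two non-exponential terms combine as $-\tfrac12 x_0 \lambda'^{-1}$. After dividing by $x_0$ the inequality reduces to
\[
\lambda_1 \ge \frac{\lambda'}{4e}\, \exp\bigl(-\lambda'(4\phi + 2\epsilon)\bigr).
\]
Assuming $\lambda' \ge 4e$ the prefactor is at least $1$, and taking logarithms yields $\lambda' \ge (4\phi + 2\epsilon)^{-1}\log(\lambda_1^{-1})$. Since $1/(4\phi) = 0.90450\ldots$, choosing $\epsilon$ small enough (e.g.\ $\epsilon \le 10^{-6}$) produces the stated constant $0.9045$.

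The main technicality I expect to need care with is checking the admissibility of this choice of $x_0$: Lemma \ref{dh1} requires $\beta'$ to lie in the rectangle $\mathcal{R}$ of \eqref{eq:R}, which in turn demands $\sigma > 1 - (1-\delta)(\log\Lo)/(x_0\Lo)$ for some admissible $\delta$. Combining the hypothesis $\lambda' < (\log\Lo)/13.85$ with the Kadiri zero-free region $\lambda'^{-1} \le R = 12.74$ gives $x_0 \le 4\phi + R^{-1} + 2\epsilon < 13.85$, so that taking $\delta$ sufficiently small (e.g.\ $\delta = 10^{-6}$) indeed places $\beta'$ inside $\mathcal{R}$ for $d_K$ sufficiently large. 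Once admissibility is settled, the remaining manipulations are elementary calculus.
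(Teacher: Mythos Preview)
Your proposal is correct and follows the paper's argument essentially verbatim: the same triangular test function, the same bound $1-e^{-\lambda_1 t}\le\lambda_1 t$, the same explicit integral estimate, the same optimizing choice $x_0=4\phi+\lambda'^{-1}+2\epsilon$, and the same admissibility check via the Kadiri zero-free region. One small slip (which the paper itself shares): in the admissibility bound you should write $x_0\le 4\phi+R+2\epsilon$ rather than $4\phi+R^{-1}+2\epsilon$, since $\lambda'^{-1}\le R=12.74$; your numerical conclusion $x_0<13.85$ is nonetheless correct.
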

Under these conditions it follows that 
\[
    \lambda' \ge  0.9045 \log( \lambda_1^{-1})>4e
\]
if $\lambda_1 < \exp\left(-\frac{4e}{0.9045}\right)
=6.015645\ldots \times 10^{-6}$.

Next we consider the case when $\lambda_1$ is slightly larger. 
\subsection{$\lambda_1$ medium size}
Notice that 
\[
  F(-\lambda')- F(\lambda_1-\lambda') = \int_{0}^{\infty} f(t) e^{\lambda' t} (1-e^{-\lambda_1 t}) dt 
\]
is increasing with respect to $\lambda' \in [\lambda_1,1/2]$. 
Therefore 
 \begin{equation}
  \label{eq:ineqsimplified}
2 F(-\lambda_1)-3 F(0) + f(0) (2 \phi + \epsilon) \ge 0
\end{equation}
implies \eqref{eq:Fin}. 
In \cite{HB}, Heath-Brown addressed the problem of minimizing the expression $F(-\lambda)$ 
with respect to $f$ for a fixed parameter $\lambda$ and  fixed values of $F(0)$ and $f(0)$.  
Moreover, he determined non-rigorously the optimal such $f=f_{\lambda, \theta}$, which depends on $\lambda$ and another parameter $\theta$. 
With this function in hand, the minimization of \eqref{eq:ineqsimplified} is equivalent to minimizing the expression 
\[
   \frac{F(-\lambda)-\frac{3}{2}F(0)}{f(0)}
\]
with respect to $\theta$.  It turns out that the optimal $\theta$  is the unique solution to 
\[
   \sin^2(\theta) = \tfrac{3}{2}(1-\theta \cot(\theta)),
\]
in $(0, \frac{\pi}{2})$ and  is given by 
\begin{equation}
  \label{eq:thetavalue}
\theta = 1.272979\ldots .
\end{equation}
The function $f$ is defined as follows (see  \cite[Lemma 7.1]{HB}):
$f$ is chosen of the form $f  = g*g$ to ensure  the positivity condition \eqref{eq:Fpos}
for the Laplace transform $F$.
In order to abbreviate notation, put 
\[
\zeta= \lambda \tan(\theta) \text{ and } d_{\theta,\lambda}= \theta \zeta^{-1}= \frac{\theta}{\lambda \tan(\theta)}.
\]
We define 
 \[
     g(t) = g_{\lambda, \theta} (t)= 
     \begin{cases}
     \lambda(1+\tan^2(\theta))(\cos(\zeta t)-\cos\theta)
     & \text{ for } |t| \le d_{\theta,\lambda} , \\
    0 & \text{ for } |t| \ge d_{\theta,\lambda}. \\
    \end{cases}
 \]
More explicitly, we have 
\begin{equation}
\begin{split}
   \label{eq:ftdefn}
    f(t) =  f_{\lambda, \theta}(t) = \lambda(1+\tan^2(\theta)) \Big(
    & \lambda (1+\tan^2(\theta))(d_{\theta,\lambda}-\frac{t}{2}) \cos(\zeta t)
    + \lambda (2 d_{\theta,\lambda}-t) \\
    & + \frac{\sin(2 \theta - \zeta t)}{\sin(2 \theta)}
    - 2 \Big(  
    1 + \frac{\sin(\theta-\zeta t)}{\sin \theta}
    \Big) \Big)
\end{split}
\end{equation}
for $0 \le t \le 2 d_{\theta,\lambda}$ and $f(t) =0$ for $t \ge 2 d_{\theta,\lambda}$.  We have the 
specific values 
\begin{align*}
   f(0) & = \lambda(1+\tan^2(\theta))(\theta \tan(\theta)+3 \theta \cot(\theta)-3), \\
   F(0) & = 2(1+\tan^2(\theta))(1-\theta \cot(\theta))^2. 
\end{align*}
We shall apply $f=f_{\lambda,\theta}$ in \eqref{eq:Fin} for specific values of $\lambda$. 
It might be reasonable to expect that such a function would be close to optimal in the inequality \eqref{eq:Fin}.

We now analyze \eqref{eq:Fin} for various choices of $\lambda$. 
Let $b$ be a fixed positive number and select a fixed positive parameter $\lambda= \lambda_b$.  
This choice of $\lambda_b$  completely determines the function $f= f_{\lambda_b,\theta}$ given by \eqref{eq:ftdefn}. 
For simplicity, we denote
\[
h(\lambda_1,\lambda') = 2 F(-\lambda')-2 F(\lambda_1-\lambda')-F(0) + 2 \phi  f(0) .
\]
Note that $F(-\lambda')-F(\lambda_1-\lambda')$ increases with respect to $\lambda_1$
and thus $h(\lambda_1,\lambda')$ also
increases with respect to $\lambda_1$. 
Hence, for all $\lambda_1$ satisfying $0\le \lambda_1 \le b$, we have
\begin{equation}
\label{eq-trig}
0 \le 
h(\lambda_1,\lambda')
\le h(b ,\lambda').
\end{equation}
Let $\lambda_b'$ be the solution of  
\begin{equation}
  \label{eq:hblamp}
h(b ,\lambda')= 0.
\end{equation}
In fact, it may be shown that for all $\delta' >0$ sufficiently small,
$\lambda' \ge \lambda_b'-\delta'.$  If not, there exists $\delta'' >0$ such that $\lambda' < \lambda_b'-\delta''$.
Therefore $h(b,\lambda') < h(b,\lambda_b'-\delta'')<0$. Next choose $\epsilon$ so small that 
$\epsilon f(0) < |h(b,\lambda'-\delta'')|$.  This implies that 
\[
   h(b,\lambda')+\epsilon f(0) < h(b,\lambda_b'-\delta'')+\epsilon f(0) < 0
\]
which contradicts \eqref{eq:Fin} for $\epsilon$ sufficiently small. 
Thus if $d_K$ is sufficiently large with respect to $\epsilon$ and $b$ and $0 \le \lambda_1 \le b$,
then $\lambda' \ge \lambda_{b}' -\delta'$ for all $\delta'$ sufficiently small.
We choose $\epsilon$ such that $\delta' \le 10^{-6}$.
We have the following table (the values are truncated at the first 4 digits).
\begin{center}
{\bf Table 1}
\end{center}
\begin{center}
\begin{tabular}{|c|c|c|c|c|}
\hline
 $b$ &   $\lambda=\lambda_b$ & $\lambda_{b}'$  & $\log(b^{-1})$  & $x_0=x_0(b)$ \\
\hline
      $10^{-6}$ &   0.543 &  12.3982 & 13.8155 & 1.4391\\
\hline
      $10^{-5}$ &  $ 0.537$ & $10.3716$ & $11.5129$ & 1.4552\\
\hline
   $10^{-4}$ &    0.526 &   8.2848 &  9.2103 & 1.4856 \\
 \hline
   $10^{-3}$ &    0.509 &  6.1120 &   6.9077 & 1.5353\\
\hline
   $0.005$  & 0.490  & 4.5233 & 5.2983 & 1.5948 \\
\hline
   $0.01$ & $0.477$ & $3.8182$ & $4.6051$ & 1.6383 \\
\hline
    $0.02$ & $0.462$ & $3.1007$ & $3.9120$ & 1.6914 \\
\hline
    $0.03$ & $0.450$ & $2.6764$ & $3.5065$ & 1.7366 \\
\hline
    $0.04$ & $0.441$ & $2.3740$ & $3.2188$ & 1.7720 \\
\hline
    $0.05$ & $0.433$ & $2.1391$ & $2.9957$ & 1.8047 \\
\hline
   0.055 &    0.429 &    2.0389 & 2.9004   & 1.8216 \\
\hline
    $0.06$ & $0.426$ & $1.9474$ & $2.8134$ &1.8344 \\
\hline
  0.065 &      0.422 &   1.8634 &     2.7333 & 1.8518\\
\hline
    $0.07$ & $0.419$ & $1.7857$ & $2.6592$ & 1.8650\\
\hline 
  0.071  & $0.418$ & $1.7708$ & $ 2.6450$ & 1.8695  \\
\hline 
  0.072  & $0.418$ & $1.7562$ & $ 2.6310$ & 1.8695\\
\hline
  0.073  & $0.417$ & $1.7418$ & $2.6172$ & 1.8740 \\
\hline
 0.074 &  $0.416$ & $1.7275$ & $2.6036$ & 1.8785 \\
\hline
 0.075 &  $0.416$ & $1.7135$ & $ 2.5902$ & 1.8785  \\
\hline
 0.076 & $0.415$ & $ 1.6996$ & $ 2.5770$ & 1.8830 \\
 \hline 
 0.077 & $0.415$ & $1.6860$ & $ 2.5639$ & 1.8830 \\
 \hline 
  0.078 & $0.414$ & $1.6725$ & $ 2.5510$ & 1.8876 \\
     \hline 
    $\frac1{12.74}=0.0784\ldots$ & $0.413$ & $1.6659$ & $ 2.5447 $ &  1.8921 \\
\hline
\end{tabular}
\end{center}
Recall that $\lambda=\lambda_b$ determines the function $f=f_{\lambda_b,\theta}$ defined by \eqref{eq:ftdefn}, $\lambda_b'$ is given by \eqref{eq:hblamp},
and $x_0=x_0(b)= \frac{2\theta}{\lambda_b \tan(\theta)}$ determines the support of $f=f_{\lambda_b,\theta}$. 
For each value of $b$, we determined $\lambda_b$ by numerical experimentation. 
From this table we are now able to derive a lower bound for $\lambda'$.   For instance if $\lambda_1 \in [0.078 , \frac{1}{12.74}]$,
then 
\[
  \lambda'  \ge  1.6659 \ge 1.6659 \frac{\log(\lambda_{1}^{-1})}{\log(0.078^{-1})}  \ge 0.6546   \log (\lambda_{1}^{-1}).
\]
By applying the same argument to each of the subintervals  $[10^{-6},10^{-5}]$, $[10^{-5},10^{-4}]$, $\ldots$, $[0.076,0.077]$, and $[0.077,0.078]$,  we 
deduce that $\lambda' \ge 0.6546   \log (\lambda_{1}^{-1})$ in each of these subintervals. 
Moreoever, glancing at the final column of the table we see that in each case $x_0=x_0(b)$ is bounded by $1.8922$. 
Consequently, if $\beta' > 1-\frac{1}{1.9} (\log \Lo) \Lo^{-1}$ (or $\lambda' <  \frac{1}{1.9} \log \Lo$), then $\beta' \in \mathcal{R}$ for each choice of $f= f_{\lambda_b,\theta}$
and $\delta \le 10^{-6}$.  We thus derive our final lemma. 

\begin{lemma} \label{dhmed}
Let $\beta_1$ be an exceptional zero of $\zeta_K(s)$  and $\varrho'=\beta'+i \gamma'$ be another zero of $\zeta_K(s)$ 
with  $\lambda' <   \frac{1}{1.9} \log \Lo$ and $|\gamma'| \le 1$.  If $10^{-6} \le \lambda_1 \le \frac1{12.74}$, then we have
\[
     \lambda' \ge 0.6546 \log (\lambda_{1}^{-1})  
\]
for $d_K$ sufficiently large.
\end{lemma}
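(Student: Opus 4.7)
The plan is to apply Lemma \ref{dh1} to the family of test functions $f = f_{\lambda_b,\theta}$ defined in \eqref{eq:ftdefn}, with $\theta$ the optimal value from \eqref{eq:thetavalue} and with a carefully chosen sequence of values $\lambda_b$ indexed by $b$ running over a partition of $[10^{-6},1/12.74]$. The key monotonicity is that
\[
F(-\lambda')-F(\lambda_1-\lambda')=\int_0^{\infty}f(t)e^{\lambda't}(1-e^{-\lambda_1 t})\,dt
\]
is increasing in $\lambda_1$, so the quantity $h(\lambda_1,\lambda')=2F(-\lambda')-2F(\lambda_1-\lambda')-F(0)+2\phi f(0)$ is increasing in $\lambda_1$. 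Therefore if $\lambda_1\le b$, the conclusion of Lemma \ref{dh1} (after absorbing $2C_3\epsilon$ into $\epsilon$ and sending $\epsilon\to 0$) forces $0\le h(\lambda_1,\lambda')\le h(b,\lambda')$, which in turn gives $\lambda'\ge \lambda_b'-\delta'$ for $\delta'$ arbitrarily small, where $\lambda_b'$ is the root of \eqref{eq:hblamp}.

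First I would fix the partition $10^{-6}<10^{-5}<\cdots<0.077<0.078<1/12.74$ recorded in Table 1. For each right endpoint $b$ I would numerically tune the free parameter $\lambda_b$ in the family $f_{\lambda_b,\theta}$ to maximise $\lambda_b'$ subject to $h(b,\lambda_b')=0$; the displayed values $(\lambda_b,\lambda_b')$ are the output of this optimisation. Then on every subinterval $[b_{i-1},b_i]$ of the partition the inequality $\lambda_1\le b_i$ combined with the monotonicity argument above yields $\lambda'\ge \lambda_{b_i}'$, while $\lambda_1\ge b_{i-1}$ gives $\log(\lambda_1^{-1})\le\log(b_{i-1}^{-1})$. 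A term-by-term check against the fourth column of Table 1 shows
\[
\frac{\lambda_{b_i}'}{\log(b_{i-1}^{-1})}\ge 0.6546
\]
on every subinterval, and hence $\lambda'\ge 0.6546\log(\lambda_1^{-1})$ throughout the range $10^{-6}\le\lambda_1\le 1/12.74$.

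A secondary but essential check is that each test function $f=f_{\lambda_b,\theta}$ is admissible for Lemma \ref{KsE} (via Lemma \ref{dh1}); since $f_{\lambda_b,\theta}$ has support $[0,x_0(b)]$ with $x_0(b)=2\theta/(\lambda_b\tan\theta)$, one needs $\beta'\in\mathcal{R}$ for this value of $x_0$. The final column of Table 1 shows $x_0(b)\le 1.8921<1.9$ uniformly, so the hypothesis $\lambda'<\tfrac{1}{1.9}\log\Lo$ (together with $|\gamma'|\le 1$) ensures $\varrho'\in\mathcal{R}$ for the choice $\delta=10^{-6}$ and $d_K$ sufficiently large. The interior-of-the-rectangle condition on $\beta_1$ is supplied by the standing exceptional-zero bound $\lambda_1<R^{-1}$ from \eqref{eq:kadiri}.

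The main obstacle is the numerical balancing act: for each $b$ in the partition one must select $\lambda_b$ so that, simultaneously, (i) the resulting $\lambda_b'$ is large enough for the ratio $\lambda_{b_i}'/\log(b_{i-1}^{-1})$ to stay above $0.6546$ across every subinterval (which is most delicate near $b=1/12.74$, where $\lambda_b'$ is small), and (ii) the support parameter $x_0(b)$ stays below $1.9$ so the resulting test function still fits in $\mathcal{R}$. The partition must therefore be refined near the right endpoint — explaining the accumulation of rows $b=0.071,0.072,\ldots,0.078$ in the table. Once those numerical verifications are in place, the lemma follows immediately from the monotonicity step and the fact that $\lambda'\ge\lambda_{b_i}'-\delta'$ with $\delta'\le 10^{-6}$.
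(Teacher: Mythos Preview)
Your proposal is correct and follows essentially the same approach as the paper: you use the monotonicity of $h(\lambda_1,\lambda')$ in $\lambda_1$ to reduce to the right endpoint $b$ of each subinterval, invoke the root $\lambda_b'$ of $h(b,\cdot)=0$, verify the ratio $\lambda_{b_i}'/\log(b_{i-1}^{-1})\ge 0.6546$ from Table~1, and check the support bound $x_0(b)<1.9$ so that $\varrho'\in\mathcal{R}$. This matches the paper's argument in every essential respect, including the choice $\delta=10^{-6}$ and the refinement of the partition near $b=1/12.74$.
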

Combining Lemmas \ref{dhsmall} and \ref{dhmed} completes the proof of Theorem \ref{dh}. 

\noindent {\bf Acknowledgements} 

\noindent The first author is supported in part by a University of Lethbridge ULRF grant and the second author is supported in part by an NSERC Discovery grant.

\end{document}